\documentclass[a4paper]{amsart}

\usepackage{amsmath}
\usepackage{amssymb}
\usepackage{amsfonts}
\usepackage{mathrsfs}
\usepackage{array}
\usepackage{enumerate}
\usepackage[french, english]{babel}
\usepackage[all]{xy}

\numberwithin{equation}{section}

\newcommand{\msg}{\mathscr{G}}
\newcommand{\msi}{\mathscr{I}}\newcommand{\msl}{\mathscr{L}}

\newcommand{\mst}{\mathscr{T}}

    \newcommand{\BC}{{\mathbb {C}}} 
     
    \newcommand{\BG}{{\mathbb {G}}}

    \newcommand{\BQ}{{\mathbb {Q}}}

     \newcommand{\BZ}{{\mathbb {Z}}}

     \newcommand{\CL}{{\mathcal {L}}}
     
    \newcommand{\CO}{{\mathcal {O}}}

    \newcommand{\CW}{{\mathcal {W}}}

     \newcommand{\fb}{{\mathfrak{b}}}

    \newcommand{\fm}{{\mathfrak{m}}} 
     \newcommand{\fp}{{\mathfrak{p}}}
    \newcommand{\fq}{{\mathfrak{q}}} 
    \newcommand{\fs}{{\mathfrak{s}}} 
     
     \newcommand{\fx}{{\mathfrak{x}}}

    \newcommand{\fC}{{\mathfrak{C}}}

    \newcommand{\fM}{{\mathfrak{M}}} 
     \newcommand{\fP}{{\mathfrak{P}}}
     \newcommand{\fR}{{\mathfrak{R}}}

    \newcommand{\fW}{{\mathfrak{W}}}

    \newcommand{\Coker}{{\mathrm{Coker}}}

    \newcommand{\End}{{\mathrm{End}}}

    \newcommand{\Gal}{{\mathrm{Gal}}} 
    
    \newcommand{\Hom}{{\mathrm{Hom}}}
    
    \renewcommand{\Im}{{\mathrm{Im}}}

    \newcommand{\ord}{{\mathrm{ord}}}

    \renewcommand{\mod}{\ \mathrm{mod}\ }

    \newcommand{\Res}{{\mathrm{Res}}}
    \newcommand{\Sel}{{\mathrm{Sel}}}

    \font\cyr=wncyr10

    \newcommand{\Sha}{\hbox{\cyr X}}
    \newcommand{\wh}{\widehat}

    \newcommand{\ov}{\overline}

    \newcommand{\ra}{\rightarrow}

\newcommand{\xto}{\xrightarrow}

    \theoremstyle{plain}

   \newtheorem{thm}{Theorem}[section]\newtheorem{cor}[thm]{Corollary}
    \newtheorem{lem}[thm]{Lemma}  \newtheorem{prop}[thm]{Proposition}

\theoremstyle{remark} 
\theoremstyle{remark} 
\theoremstyle{remark}

    \numberwithin{equation}{section}

\DeclareFontFamily{U}{wncy}{}
\DeclareFontShape{U}{wncy}{m}{n}{<->wncyr10}{}
\DeclareSymbolFont{mcy}{U}{wncy}{m}{n}

\begin{document}

\title[Non-vanishing of central $L$-values of Gross family of curves]{Non-vanishing of central $L$-values of the Gross family of elliptic curves}
\author{Yukako Kezuka} \thanks{The first author is supported by the European Union's Horizon 2020 research and innovation programme under the Marie Sk\l odowska-Curie grant agreement No.\,101026826.} \author{Yong-Xiong Li}
\subjclass[2010]{11R23 (primary), 11G05, 11G40 (secondary).}

\selectlanguage{english}
\begin{abstract}
We prove non-vanishing theorems for the central values of~$L$-series of quadratic twists of the Gross elliptic curve with complex multiplication by the imaginary quadratic field $\mathbb{Q}(\sqrt{-q})$, where $q$ is any prime congruent to $7$ modulo $8$. This completes the non-vanishing theorems proven by Coates and the second author in which the primes $q$ were taken to be congruent to $7$ modulo $16$. From this, we obtain the finiteness of the Mordell--Weil group and the Tate--Shafarevich group for these curves. For a prime~$\mathfrak{P}$ lying above the prime~$2$, we also prove a converse theorem in the rank $0$ case and the $\mathfrak{P}$-part of the Birch--Swinnerton-Dyer conjecture for the higher-dimensional abelian varieties obtained by restriction of scalars.
\end{abstract}

\maketitle

\section{Introduction}\label{S1}

Let~$q$ be a prime number congruent to $7$ modulo $8$. We define $K$ to be the imaginary quadratic field $\mathbb{Q}(\sqrt{-q})$, viewed as a subfield of~$\mathbb{C}$, with ring of integers~$\mathcal{O}_K$ and class number $h$. Let~$H=K(j(\mathcal{O}_K))$ be the Hilbert class field of~$K$, where $j(\mathcal{O}_K)$ denotes the $j$-invariant of the complex lattice $\mathcal{O}_K$. The prime~$2$ then splits in~$K$, say $2\mathcal{O}_K=\mathfrak{p}\mathfrak{p}^*$, and since $q\equiv 3\bmod 4$ is a prime, $h$ is odd by the genus theory due to Gauss.

Amongst the elliptic curves with complex multiplication by~$K$, Gross has introduced in \cite{Gross78} an elliptic curve $A$ with the particularly favourable properties (in fact, the Gross curve can be defined for any prime~$q\equiv 3\bmod 4$). The Gross elliptic curve $A$ is the unique elliptic curve which is defined over~$\mathbb{Q}(j(\mathcal{O}_K))$ with complex multiplication by~$\mathcal{O}_K$, minimal discriminant $(-q^3)$, and which is a $\mathbb{Q}$-curve, in the sense that the curve $A$ is $H$-isogeneous to all its conjugates $A^\sigma$, where $\sigma$ is any element of the Galois group ${\rm Gal}(H/K)$. In addition, Gross has shown that $A$ has a global minimal model over~$\mathcal{O}_H$, and we write $\Omega_\infty$ for the corresponding complex period.

The study of the arithmetic of~$A$ has attracted many mathematicians since it was developed in \cite{Gross78}. In particular, Gross has shown in \cite{Gross78}  by a $2$-descent argument that $A$ has Mordell--Weil rank zero over~$H$. Motivated by this, Rohrlich showed in \cite{Ro1} that the $L$-function $L(A/H,s)$ of~$A$ over~$H$ does not vanish at $s=1$. Following this, Rodriguez Villegas gave another proof in \cite{rv} for the fact that $L(A/H,1)\neq 0$ using a special value formula which links the $L$-values to the square of twisted by class group character linearly combination of theta values. This formula was parallel to the work of Waldspurger \cite{wal} on general ${\rm GL}_2$ automorphic $L$-functions.

Let~$A^{(D)}$ be a quadratic twist of~$A$ by the extension $H(\sqrt{D})/H$ for a square-free integer $D$. The arithmetic of~$A^{(D)}$ has also received a lot of attention. For example, using theta liftings and analytic methods, Yang \cite{yang1, yang2} showed that $L(A^{(D)}/H,1)\neq 0$ when $D$ is small compared to $q$, roughly, $D<q^{\frac{1}{4}}$.

We now introduce some notation to state our main theorem. Let~$\fR$ denote the set of all square-free positive integers $R$ of the form $R = r_1\cdots r_k$, where $k\geq 0$ ($k=0$ means $R=1$) and $r_1, \ldots, r_k$ are distinct primes such that (i) $r_i \equiv 1 \mod 4$, and  (ii) $r_i$ is inert in~$K$, for~$i=1, \ldots, k$. These conditions are motivated by the theory of 2-descent, which shows that when (i) and (ii) hold, the rank of~$A^{(R)}(H)$ is equal to zero (see \cite{CL} or \cite{choi}).

We write $L(A^{(R)}/H, s)$ for the complex $L$-series of~$A^{(R)}/H$. By Deuring's theorem, $L(A^{(R)}/H, s)$ is a product of Hecke $L$-functions coming from the Hecke character $\psi_R$ associated to $A^{(R)}/H$. The Hecke character $\psi_R$ is of the form $\phi_R\circ N_{H/K}$, where $\phi_R$ is a Hecke character over~$K$ defined in Section~\ref{S2} and $N_{H/K}$ denotes the norm map from $H$ to $K$. Let~$L(\phi_R,s)$ denote the $L$-function of~$\phi_R$, which has analytic continutation to the whole complex $s$-plane.
In view of the relations discussed in Section~\ref{S2}, it suffices to show $L(\overline{\phi}_R,1)\neq 0$ to conclude  $L(A^{(R)}/H, 1) \neq 0$. Let~$T$ be the CM field generated by values of~$\phi_R$ over~$K$, and let~$\fP$ denote the special prime of~$T$ lying above~$2$ whose existence is given in Proposition~\ref{s1}. Without loss of generality, we assume that $\fP$ lies above the prime~$\fp$ of~$K$. We shall prove the following result.

\begin{thm}\label{main}
Let $q$ be a prime congruent to $7$ modulo $8$ and $K = \mathbb{Q}(\sqrt{-q})$. Let $H$ be the Hilbert class field of $K$ and $A$ the Gross curve defined over~$H$ with complex period $\Omega_\infty$.
For $R = r_1 \cdots r_k \in \fR$, denote by $A^{(R)}$ the twist of $A$ by $H(\sqrt{R})/H$. Let $\psi_R = \phi_R \circ N_{H/K}$ be the Hecke character associated to $A^{(R)}/H$, where $\phi_R$ is a Hecke character over~$K$. Let $T$ be the CM field generated over~$K$ by the values of~$\phi_R$ and $TH$ the compositum of the fields $T$ and $H$. Then
$$
\frac{L(\overline{\phi}_R, 1)\sqrt{R}}{\Omega_\infty} \in TH,
$$
and for any prime~$\mathcal{P}$ of~$TH$ lying above~$\fP$, we have
$$
\ord_{\mathcal{P}}\left(\frac{L(\overline{\phi}_R, 1)\sqrt{R}}{\Omega_\infty}\right) = k - 1.
$$
In particular, $L(A^{(R)}/H, 1) \neq 0$, and both the Mordell–Weil group $A^{(R)}(H)$ and the Tate–Shafarevich group $\Sha(A^{(R)}/H)$ are finite.
\end{thm}

As a consequence of Theorem~\ref{main}, we obtain the following density result. Let $D\geq 1$ denote a fundamental discriminant, and let
$$N(X) = \#\{\text{$D < X$ : $L(A^{(D)}/H,1)\neq 0\}$}.$$
Then we have
\begin{cor} \label{cor1.2}
$$N(X)>\!\!>\frac{X}{\log^{\frac{3}{4}} X}.$$
\end{cor}

Combining with the main theorem of \cite{BF}, we obtain the following:

\begin{thm}
Under the same assumptions as in Theorem~\ref{main}, the full the Birch--Swinnerton-Dyer conjecture holds for~$A^{(R)}$ over~$H$.
\end{thm}

For other aspects concerning the order of $\Sha(A^{(R)}/H)$, see also \cite{yli}.

We give several comments on Theorem 1.1.
The implication that $L(A^{(R)}/H, 1) \neq 0$, then $A^{(R)}(H)$ is finite, is a theorem of Coates--Wiles \cite{CW} and its extensions due to Arthaud \cite{A} and Rubin \cite{Ru1}.
Theorem~\ref{main} provides an infinite family of elliptic curves defined over number fields (not over~$\mathbb{Q}$), with conductors divisible by arbitrarily finitely many primes, such that their $L$-values at $s = 1$ are non-vanishing. In fact, we determine the exact $2$-adic valuation of the corresponding algebraic $L$-values. As far as we know, the study of the $2$-adic properties of algebraic $L$-values associated to elliptic curves was initiated by Razar~\cite{razar}, and later extended by Zhao~\cite{Zhao1, Zhao2} to elliptic curves over~$\mathbb{Q}$ with conductors divisible by arbitrarily finitely many primes. For~$2$-adic aspects of the algebraic derivative values of~$L$-series, see Tian's work on the congruent number problem~\cite{ye} (or \cite{ye2}).

The implication that $L(A^{(R)}/H, 1) \neq 0$, then both $A^{(R)}(H)$ and $\Sha(A^{(R)}/H)$ are finite follows from the theorem of Gross--Zagier and Kolyvagin. Recall that $A$ is a $\mathbb{Q}$-curve and its restriction of scalars is modular, and the relation \eqref{GrR} satisfied by the Hecke characters can be used to show that the curves we deal with appear as quotients of the Jacobian $J_0(N)$ (see \cite{BG}).

Letting $R=1$ in Theorem~\ref{main}, we obtain~$L(A/H,1)\neq0$, reproving the result of Rohrlich in \cite{Ro1}. The key difference is that our result gives the exact valuation at the primes above $\fP$ of the algebraic $L$-value for the base curve, \textit{which provides the crucial initial step in the induction on which the proof of Theorem~\ref{main} relies}.

We remark that when $q\equiv 7\mod 16$, the above result was proven by Coates and the second author in \cite{CL}. The new part of the theorem thus concerns the case when $q\equiv 15\bmod 16$. We stress that this case could not be covered in \cite{CL}, and this is due to the fact that the corresponding Iwasawa module $X(F_\infty)$ introduced in Section~\ref{S5} is trivial in the case $q\equiv 7\mod 16$, but it was proven to be always non-trivial in the case $q\equiv 15\bmod 16$ (see \cite{JLi}). It is thus essential to obtain a better understanding of this Iwasawa module.

The module $X(F_\infty)$ is the Pontryagin dual of a certain Selmer group (see Theorem~\ref{sel-rel-x}). A key method used to prove Theorem~\ref{main} is thus a combination of various descents at finite levels and an infinite descent in the setting of Iwasawa theory.

In proving Theorem~\ref{main}, we will first study the abelian variety $B=\Res_{H/K}(A)$ over~$K$. Another key ingredient is the Iwasawa main conjecture at the special prime~$\fP$, following \cite{CL}.  Indeed, the field $F_\infty$ above is given by~$K(B_{\fP^\infty})$, obtained by adjoining to $K$ all $\fP$-power division points on $B$. In order to treat the case $p=2$, we also need to show that $X(F_\infty)$ is a finitely generated $\BZ_2$-module. This will follow from our earlier work \cite{CKL}, in which we show that the Iwasawa module $X(H(A_{\fp^\infty}))$ corresponding to the field $H(A_{\fp^\infty})$ is a finitely generated $\BZ_2$-module. Combined with the descent arguments, this gives us a non-vanishing result for the base curve $A$ and a rank zero $\mathfrak{P}$-converse theorem for~$B$. Furthermore, we prove the $\mathfrak{P}$-part of the Birch--Swinnerton-Dyer conjecture for~$B$ over~$K$, a refinement due to Buhler and Gross \cite{BG}. Using this as the base case, we apply an induction argument on the number of prime divisors of~$R\in \mathfrak{R}$, a generalisation of Zhao's method to abelian varieties, to extend the non-vanishing result to the twists $A^{(R)}$ as in Theorem~\ref{main}.

We note that there have recently been several developments related to the subject of our paper, as discussed in \cite{BF, BT} (see also \cite[Thm.\,4.4]{BT2} and \cite[Thm.\,22]{ye2}). We remark that the standard $2$-converse theorem is not directly applicable in our setting. Indeed, the numerical examples in \cite{DJS} show that, in particular for $q = 431$ and $q = 751$, the $2$-part of the Tate--Shafarevich group $\Sha(A/H)$ is non-trivial. Since a 2-descent on $A/H$ yields information only about the $2$-Selmer group, understanding the $\BZ_2$-corank of the full $2^\infty$-Selmer group requires a higher 2-descent, i.e., a study of the $4$- or $8$-Selmer group. This process is notably difficult for $A$. Another complication arises from the primes $\fP_T$ of $T$ lying above $\fp$. Since $T$ is not Galois over~$K$, the $\fP_T$-components of the Selmer group of $B/K$ are not symmetric, making it difficult to determine which $\fP_T$-part vanishes. A key observation in our case is to identify a special prime~$\fP$ of $T$ such that the $\fP^\infty$-part of the Selmer group of $B/K$ vanishes. Moreover, our proofs of both the $\fP$-converse theorem and the refined $\fP$-part of the Birch--Swinnerton-Dyer conjecture are direct and self-contained, and do not rely on various formulations of the Iwasawa main conjecture.

This paper is structured as follows. In Section~\ref{S2}, we introduce some notation and preliminary results. We give a $2$-descent argument in Section~\ref{S3} showing that~$B$ has no first descent at the special prime~$\fP$ of~$T$ lying above~$2$. In Sections~4 and~5, we prove a rank zero converse theorem for~$B$ at the prime~$\fP$, and prove the $\fP$-part of the Birch--Swinnerton-Dyer conjecture. This uses Iwasawa theory for the prime~$\mathfrak{P}$ and the descent results, giving the exact $\fP$-adic valuation of the algebraic part of the Hecke $L$-value associated to $B/K$ at $s=1$. In Section~\ref{S6}, we apply a generalisation of Zhao's induction method to obtain the non-vanishing result for the twisted curves, concluding the proof of Theorem~\ref{main}. The proof of Corollary~\ref{cor1.2} can be found at the end of Section~\ref{S6}.\\

\textbf{Acknowledgement}\quad
We would like to express our gratitude to Pierre Colmez for his insightful comments on this paper and for suggesting the inclusion of the density argument presented in Corollary 1.2. Our thanks also extend to Shou-Wu Zhang for initiating the density question at the Iwasawa 2023 conference. Additionally, we would like to thank the anonymous referees for their constructive feedback, which led to significant improvements in the organisation and presentation of the paper.
We wish to dedicate this paper to commemorate John Coates. John encouraged us to consider the problems in this paper. He believed in the power of small primes in number theory, and he inspired us to pursue it. With this paper, we wish to tell him that what he predicted is \mbox{indeed true}.

\section{Preliminaries}\label{S2}

Since the minimal discriminant ideal $(-q^3)$ of~$A$ is a principal ideal, it raises the question as to whether there exists a global minimal Weierstrass equation for~$A$ over the ring of integers $\mathcal{O}_H$ of~$H$. Gross has proven that this is indeed the case~\cite{Gross82}, and we fix one such equation
\begin{equation}\label{1}
  y^2+a_1xy+a_3y=x^3+a_2x^2+a_4x+a_6,
\end{equation}
with $a_i\in \mathcal{O}_H$.  We let~$B/K$ be the abelian variety which is the restriction of scalars from $H$ to $K$ of the Gross curve $A/H$. For each $R \in \fR$, we write $B^{(R)}$ for the twist of~$B$ by the quadratic extension $K(\sqrt{R})/K$, and $A^{(R)}$ for the twist of~$A$ by the quadratic extension $H(\sqrt{R})/H$. From \cite{BG} or \cite{Gross78}, we know that $B^{(R)}$ is in fact the restriction of scalars from $H$ to $K$ of~$A^{(R)}$.

We write
$$
\mst=\End_K(B^{(R)})=\End_K(B)$$
and
$$T = \mst\otimes_\BZ \BQ.
$$
Then $T$ is a CM field of degree $h$ over~$K$, where $h$ denotes the class number of~$K$. Let~$\alpha: T\hookrightarrow \BC$ be an embedding of~$T$ into~$\BC$ which extends our embedding of~$K$ into~$\BC$. There are $h$ such choices of~$\alpha$. We write $\psi$ for the Hecke character of~$A/H$ and $\phi=\phi^\alpha$ for the Hecke character of~$B/K$ relative to $\alpha$. We then have $\psi = \phi \circ N_{H/K}$, where $N_{H/K}$ denotes the norm map from $H$ to $K$. Since $(R, q) = 1$, the Hecke character  $\phi_R$ of~$B^{(R)}/K$ is then given by~$\phi_R=\phi\chi_R$, where $\chi_R$ denotes the abelian character of~$K$ defining the quadratic extension $K(\sqrt{R})/K$. Moreover, by the explicit description of the Hecke characters $\psi_R$ and $\phi_R$ in \cite{BG}, we have
\begin{equation}\label{GrR}
\psi_R=\phi_R\circ N_{H/K}.
\end{equation}
Note that the CM field $T$ is also generated by the values of the Hecke character $\phi$ (or $\phi_R$) over~$K$.
In what follows, we shall write $\ov{\psi}$ and $\ov{\phi}$ for the complex conjugate characters of~$\psi$ and $\phi$, respectively.

Since $A$ is a $\BQ$-curve, we have $L(A/H,s)=L(\ov{\psi},s)^2$ by a theorem of Deuring, and we have the relation between $L$-series \cite[Section~18]{Gross78}
\begin{equation}\label{phipsi}L(\ov{\psi},s)=\prod_{\alpha\in \Hom_K(T,\BC)}L(\ov{\phi}^\alpha,s),
\end{equation}
where the product runs over the $h$ distinct embeddings of~$T$ lying above our fixed embedding of~$K$. We will first show the non-vanishing of~$L(\ov{\phi},1)$. Since the same arguments show that $L(\ov{\phi}^\alpha,1)\neq 0$ for any $\alpha$, we can conclude that $L(A/H,1)\neq 0$. Similarly, in order to show $L(A^{(R)}/H,1)\neq 0$, it is sufficient to show $L(\ov{\phi}_R,1)\neq 0$.

We now choose a prime of~$T$ lying above~$2$, which is well suited for our purpose. From Section~13 of \cite{Gross78} (or \cite{BG}), the index of~$\mst$ in the maximal order of~$T$ is prime to 2.  As $q\equiv 7\mod 8$, the prime~$2$ splits in~$K$ into two distinct primes, which we will denote by~$\fp$ and $\fp^*$. The following lemma, whose detailed proof can be found in~\cite{BG} or Lemma 2.1 in \cite{CL}, gives the existence of a degree one prime~$\fP$ of~$T$ above~$\fp$. The reason for the existence of such a prime comes from considering the action of~$\mst$ on $B(K)=A(H)$ and using a fundamental theorem of Coates and Wiles (see the proof in Lemma~\ref{lem3-1}).
We remark this is the special prime mentioned in the introduction, which will play a fundamental role throughout this paper.

\begin{prop}\label{s1}
There exists a unique, unramified degree one prime~$\fP$ of~$T$ lying above~$\fp$.
\end{prop}

\noindent Of course, since the index of~$\mst$ in the ring of integers of~$T$ is prime to 2, $\fP \cap \mst$ will be a degree one prime ideal of~$\mst$, which for simplicity we shall again denote by~$\fP$. We will also use the degree one prime~$\fP^*$ of~$T$ lying above~$\fp^*$, which is obtained by applying the complex conjugation to $\fP$.

\section{The first $2$-descent on $B$ over~$K$}\label{S3}

Since $\mst$ is the ring of~$K$-endomorphisms of the abelian variety $B$, for each integer $n\geq1$, we define $B_{\fP^n} = \bigcap_{x \in \fP^n} B_x$, where $B_x$ denotes the group of~$x$-division points in~$B(\overline{K})$ for $x \in \mst$. Let~$K(B_{\fP^n})$ be the field generated by these points over~$K$, and set $F_\infty = \bigcup_{n \geq 1} K(B_{\fP^n})$. Iwasawa theory of~$B$ for the prime~$\mathfrak{P}$ shall deal with the extension $F_\infty/K$. Its Galois group is of the form $\Gamma\times \Delta$ where $\Gamma\simeq \BZ_2$ and $\Delta\simeq \BZ/2\BZ$. The main conjecture of Iwasawa theory
relates the corresponding Iwasawa module~$X(F_\infty)$ to a $\mathfrak{P}$-adic $L$-function, which interpolates the Hecke $L$-value $L(\ov{\phi},1)$ (see Section~5). As we remarked earlier or from \cite{CL}, the Pontryagin dual of~$X(F_\infty)$ is the $\fP^\infty$-Selmer group $\Sel_{\fP^\infty}(B/F_\infty)$.
We shall prove that its $\Gamma$-invariant part $\Sel_{\fP^\infty}(B/F_\infty)^\Gamma$ is finite and compute the $\fP$-adic valuation of its order to relate it to the $\fP$-adic valuation of the Hecke $L$-value. This requires carrying out various descents, both at finite and inifinite levels, and proving exact relations between them. In this section, we will carry out a $2$-descent on $B$ over~$K$.

Let~$G=\Gal(H/K)$, which is a group of order $h$, an odd number by our choice of~$q$. In this section, we will use a $G$-invariant $2$-descent result on $A/H$ due to Gross (see the proof of Proposition~\ref{30}) to prove a $2$-descent result on $B/K$.
For each integer $n\geq1$, let~$\Sel_{\fP^n}(B/K)$ denote the $\fP^n$-Selmer group of~$B$ over~$K$.
Recall that $\fP$ lies above~$\fp$, where $\fp$ is one of the primes of~$K$ lying above~$2$.
For each integer $n\geq1$, let~$A_{\fp^n}$ be the Galois module of~$\fp^n$-division points on $A$. We similarly denote the $\fp$-Selmer group $\Sel_{\fp}(A/H)$ of~$A$ over~$H$. The Selmer group $\Sel_{\fp}(A/H)$ is a $G$-module, and we denote by~$\Sel_{\fp}(A/H)^G$ the subgroup of~$\Sel_{\fp}(A/H)$ consisting of elements fixed by~$G$.

\begin{prop}\label{30}
We have
\[\Sel_{\fp}(A/H)^G\simeq \BZ/2\BZ.\]
\end{prop}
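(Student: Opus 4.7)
\emph{Plan.} Our approach is to invoke Gross's explicit $G$-invariant $2$-descent on $A/H$ and then take $G$-invariants, using that $|G|=h$ is odd.

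First, the complex multiplication provides an $\fp$-isogeny $\lambda: A \to A/A[\fp]$ of degree $2$ defined over $H$. The associated Kummer sequence gives a $G$-equivariant embedding of $\Sel_\fp(A/H)$ into a subgroup of $H^\times/(H^\times)^2$ carved out by local conditions at the primes of $H$ of bad reduction (all above $q$) and above $\fp$. Because the minimal Weierstrass equation \eqref{1} is defined globally over $\mathcal{O}_H$, the descent and its local conditions are canonically $G$-equivariant. This is essentially the $2$-descent Gross uses in \cite{Gross78} to prove that $A$ has Mordell--Weil rank zero over $H$.

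Second, we take $G$-invariants. Since $|G|$ is odd, $H^i(G, M)$ vanishes in positive degrees for any $2$-primary $G$-module $M$. Combined with Hilbert $90$ (and the observation that $K^\times \cap (H^\times)^2 = (K^\times)^2$, since $H/K$ has odd degree and so admits no quadratic subextension), this yields $(H^\times/(H^\times)^2)^G \simeq K^\times/(K^\times)^2$, and similarly for the local pieces. Hence $\Sel_\fp(A/H)^G$ embeds into an explicit Kummer subgroup of $K^\times/(K^\times)^2$ cut out by local conditions at the primes $\fp$, $\fp^*$ of $K$ above $2$ and the ramified prime above $q$.

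Third, one verifies directly that exactly one nontrivial class satisfies every local condition. The key inputs are: the class number of $K$ is odd (simplifying the global Kummer group over $K$); $A$ has good ordinary reduction at $\fp$ (controlling the local condition there); and the discriminant $-q^3$ pins down the behavior at $q$. The unique surviving class yields $\Sel_\fp(A/H)^G \simeq \BZ/2\BZ$. The main obstacle is the precise local analysis at $\fp$: the Galois structure of $A_\fp$ must be matched to the local Kummer condition, and the interplay between the $\fp$- and $\fp^*$-torsion of $A$ under the CM action needs to be tracked carefully to isolate exactly one nontrivial class rather than zero or more.
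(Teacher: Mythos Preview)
Your plan is correct in spirit but takes a much longer road than the paper, and the part you flag as ``the main obstacle'' (the local analysis at $\fp$) is precisely the computation you never carry out. So as written, the proposal is a sketch with an acknowledged gap at the decisive step.

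The paper's proof avoids this entirely. It quotes Gross's result \cite{Gross78} in the form
\[
\Sel_2(A/H)^G \simeq \CO_K/2\CO_K,
\]
so all of the delicate local work (at the primes above $q$ and above $2$) is already packaged inside this statement. Then, since $2$ splits in $K$, one has $A_2 = A_\fp \times A_{\fp^*}$ as $\Gal(\ov{K}/H)$-modules (both factors are $H$-rational), and hence
\[
\Sel_2(A/H) = \Sel_\fp(A/H) \times \Sel_{\fp^*}(A/H)
\]
as $\CO_K/2\CO_K$-modules. Taking $G$-invariants (harmless because $|G|=h$ is odd) and tensoring with $\CO_{K_\fp}$ picks off the $\fp$-component, giving $\Sel_\fp(A/H)^G \simeq \CO_{K_\fp}/\fp \simeq \BZ/2\BZ$. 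This is a five-line argument.

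Your route instead tries to redo Gross's descent directly for the $\fp$-isogeny. Two comments on that. First, the isogeny $\lambda: A \to A/A[\fp]$ need not have target isomorphic to $A$ over $H$ when $\fp$ is non-principal (the $j$-invariant changes to $j(\fp^{-1})$), so you would be doing a genuine $\phi$-descent between two distinct curves; the local conditions then involve both $A$ and $A'$, and you do not address this. Second, even granting your descent framework, the passage from ``embed $\Sel_\fp(A/H)^G$ into an explicit subgroup of $K^\times/(K^\times)^2$'' to ``exactly one nontrivial class survives'' is the entire content of the proposition, and you leave it as an assertion. The paper's approach buys you precisely the ability to skip this: the local analysis is absorbed into the citation of Gross's $2$-Selmer computation, and the splitting $\CO_K/2\CO_K \simeq \BZ/2\BZ \times \BZ/2\BZ$ along $\fp$ and $\fp^*$ does the rest.
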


\begin{proof}
Let~$\Sel_{2}(A/H)$ be the $2$-Selmer group of~$A/H$ using the Galois module $A_2$ of~$2$-torsion points on $A$. This is also a $G$-module, and we denote by~$\Sel_{2}(A/H)^G$ its Galois invariant subgroup. The $G$-invariant Selmer group is much easier to compute than the full Selmer group, and Gross shows in \cite{Gross78} that
\begin{equation}\label{g-f}
  \Sel_2(A/H)^G\simeq \CO_K/2\CO_K.
\end{equation}
For any integral ideal $\fx$ in~$\CO_K$, we let~$H(A_\fx)$ the field generated by the $\fx$-division points over~$H$.
Since $A$ has bad reduction only at primes of~$H$ above~$q$, and a fundamental result of Coates--Wiles (see~\cite{CL} or~\cite{CW}) shows that $A$ acquires good reduction everywhere over~$H(A_{\fp^2})$ (or over~$H(A_{\fp^{*2}})$), it follows that $A_{2^\infty}(H)= A_2$.

Therefore, we have the decomposition
$$A_2=A_\fp\oplus A_{\fp^*}$$
as $\Gal(\ov{K}/H)$-modules. By checking the local conditions at each prime, this implies
$$\Sel_2(A/H)=\Sel_{\fp}(A/H)\oplus\Sel_{\fp^*}(A/H),$$
where $\Sel_{\fp^*}(A/H)$ is the $\fp^*$-Selmer group of~$A$ over~$H$.
Since both $\Sel_\fp(A/H)$ and $\Sel_{\fp^*}(A/H)$ are $G$-modules that are finite of even order, and since $G$ has odd order (by Gauss’s genus theory, as $h$ is odd), we obtain
\[\Sel_2(A/H)^G=\Sel_\fp(A/H)^G\oplus\Sel_{\fp^*}(A/H)^G.\]
Note that $\Sel_{\fp}(A/H)^G$ is an $\CO_{K_\fp}$-module where $\CO_{K_\fp}$ is the ring of integers of~$K_\fp$, and $\CO_{K_\fp}/\fp\CO_{K_\fp}\simeq\BZ/2\BZ$. The proposition then follows by taking the tensor product on both sides of \eqref{g-f} with $\CO_{K_\fp}$.
\end{proof}

We will also use the following isomorphism of Galois modules.

\begin{lem}\label{lem3-1}
There exists an isomorphism  of~$\Gal(\ov{K}/H)$-modules
\[B_\fP\simeq A_\fp.\]
\end{lem}

\begin{proof}
Since $B_\fP$ is a $\Gal(\ov{K}/K)$-module, we may naturally regard it as a $\Gal(\ov{K}/H)$-module. By the definition of the Weil restriction, we have an identification $B(K) = A(H)$. As shown in the proof of Proposition~\ref{30}, we have
$$B_{2^\infty}(K)=A_{2^\infty}(H)=A_2=A_\fp\oplus A_{\fp^*}.$$
From \cite{BG} (see also \cite{CL}), the $\mst$ action on $B_{2^\infty}(K)$ factors through the ideal $\fP\fP^*$. It follows that both $B_\fP$ and $A_\fp$ are isomorphic to $\BZ/2\BZ$, with trivial $\Gal(\ov{K}/H)$-action.
The lemma now follows.
\end{proof}

\begin{prop}\label{2-descent-B}
We have
\[\Sel_\fP(B/K)\simeq B_\fP.\]
In particular, $B(K)$ is finite and
\[\Sha(B/K)(\fP)=0.\]
Here, we denote by~$\Sha(B/K)(\fP)$ the $\fP$-primary part of the Tate--Shafarevich group of~$B/K$.
\end{prop}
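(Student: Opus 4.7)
The plan is to compare $\Sel_\fP(B/K)$ with $\Sel_\fp(A/H)^G$ by an inflation-restriction argument for $G = \Gal(H/K)$ and then invoke Proposition~\ref{30}. By Lemma~\ref{lem3-1}, $B_\fP \simeq A_\fp$ as $\Gal(\ov{K}/H)$-modules; since both groups have order $2$ the Galois actions are automatically trivial, so $B_\fP$ is $\BZ/2\BZ$ with trivial $\Gal(\ov{K}/K)$-action. Because $|G| = h$ is odd, $H^i(G, \BZ/2\BZ) = 0$ for $i \geq 1$, and the Hochschild-Serre inflation-restriction sequence yields
\[
H^1(K, B_\fP) \xto{\sim} H^1(H, A_\fp)^G.
\]

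The next step is to match the local Selmer conditions under this isomorphism. Since $B = \Res_{H/K}(A)$, for each place $v$ of $K$ we have $B(K_v) = \prod_{w\mid v} A(H_w)$, and the same inflation-restriction applied locally identifies $H^1(K_v, B_\fP)$ with the $G$-invariants of $\prod_{w\mid v} H^1(H_w, A_\fp)$. A diagram chase in the local Kummer sequences then shows that the image of $B(K_v)/\fP B(K_v)$ in $H^1(K_v, B_\fP)$ corresponds to the $G$-invariants of the images of the various $A(H_w)/\fp A(H_w)$, so that
\[
\Sel_\fP(B/K) \simeq \Sel_\fp(A/H)^G \simeq \BZ/2\BZ
\]
by Proposition~\ref{30}. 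This compatibility of local conditions is the main technical step: the ideal $\fP$ lies in $\mst$ and only its contraction to $\CO_K$ is $\fp$, so one must exploit that $\mst_\fP \simeq \BZ_2$ is a direct factor of $\mst \otimes \BZ_2$ in order to pick out the correct $\fP$-piece of $B(K_v)/\fp B(K_v)$ on each side.

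Finally, to deduce the remaining assertions, I would invoke the Kummer exact sequence
\[
0 \to B(K)/\fP B(K) \to \Sel_\fP(B/K) \to \Sha(B/K)[\fP] \to 0.
\]
Since $B_\fP(K) = \BZ/2\BZ$ injects into $B(K)/\fP B(K)$, the computation $|\Sel_\fP(B/K)| = 2$ forces $|B(K)/\fP B(K)| = 2$ and $\Sha(B/K)[\fP] = 0$. Setting $M := B(K)\otimes_{\mst} \mst_\fP$, the structure theorem over the discrete valuation ring $\mst_\fP \simeq \BZ_2$ gives $|M/\fP M| = 2^r \cdot |M[\fP]|$, where $r$ is the $\mst_\fP$-rank of $M$; combined with $|M[\fP]| \geq 2$ this yields $r = 0$. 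Since the $T$-rank of $B(K)$ coincides with $r$, we obtain $B(K) \otimes_{\BZ} \BQ = 0$, so $B(K)$ is finite. The equality $\Sha(B/K)(\fP) = 0$ then follows from $\Sha(B/K)[\fP] = 0$ by the standard argument: multiplication by a local uniformizer of $\mst_\fP$ is injective on the $\fP$-primary submodule $\Sha(B/K)(\fP)$, yet every element of it is annihilated by some power of the uniformizer, forcing the submodule to vanish.
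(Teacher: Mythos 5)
Your argument follows essentially the same route as the paper's proof: use the oddness of $|G|$ (vanishing of $H^i(G,\cdot)$ on $2$-torsion modules) to identify $H^1(K,B_\fP)$ with $H^1(H,A_\fp)^G$ via restriction, match the local conditions through the decomposition $B(K_v)=\prod_{w\mid v}A(H_w)$, and then invoke Proposition~\ref{30}. Your explicit deduction of the finiteness of $B(K)$ and of $\Sha(B/K)(\fP)=0$ from the Kummer exact sequence and the structure theory over the DVR $\mst_\fP$ correctly fills in the details behind the paper's terse ``in particular.''
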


\begin{proof}
The proof crucially depends on the $2$-descent result in Proposition~\ref{30}. We recall the following definitions for~$\Sel_\fP(B/K)$ and $\Sel_\fp(A/H)$ using the exact sequences
\[0\ra\Sel_\fP(B/K)\ra H^1(K,B_\fP)\ra\prod_v \frac{H^1(K_v, B_{\fP})}{{\rm Im}\, \kappa_v(B)}\]
and
\[0\ra \Sel_\fp(A/H)\ra H^1(H,A_\fp)\ra \prod_w \frac{H^1(H_w,A_\fp)}{{\rm Im}\, \kappa_w(A)},\]
where, $\kappa_v(B)$ (resp. $\kappa_w(A)$) denotes the local Kummer map of~$B$ (resp. $A$) at~$v$ (resp. $w$).
Recall that $G=\Gal(H/K)$. By taking the $G$-invariant part of the second exact sequence, we obtain the exact sequence
\[0\ra\Sel_\fp(A/H)^G\ra H^1(H,A_\fp)^G\ra \left(\prod_w \frac{H^1(H_w,A_\fp)}{\Im\kappa_w(A)}\right)^G.\]
Using Lemma~\ref{lem3-1} and noting that  $G$ is of odd order, we see that the restriction map
\begin{equation}\label{global-iso}
  i:H^1(K,B_\fP)\ra H^1(H,A_\fp)^G
\end{equation}
is an isomorphism. Furthermore, locally at each prime~$v$ of~$K$ we can identify~$G$ with the group \mbox{$\Gal((K_v\otimes_K H)/K_v)$}. Thus, the restriction map also gives the isomorphism
\begin{equation}\label{local-iso}
  j_0: H^1(K_v,B_\fP)\ra\left(\prod_{w\mid v}H^1(H_w,A_\fp)\right)^G.
\end{equation}
Now, since $B$ is the restriction of scalars of~$A$ from $H$ to $K$ (or from \cite{Gross78}), we have
\begin{align*}
 B(K_v)=A(K_v\otimes_K H)=\prod_{w\mid v}A(H_w).
\end{align*}
Since $B(K_v)$ is fixed by~$G=\Gal((K_v\otimes_K H)/K_v)$, given any point $(P_w) \in \prod_w A(H_w)$ and any $\sigma\in G$, we may identify $(P_w)$ with $(P^\sigma_{\sigma w})$.

Note that we have $\CO_{T_\fP}/\fP=\CO_K/\fp=\BZ/2\BZ$, and thus
\[\left(\prod_{w\mid v}A(H_w)\right)\otimes (\CO_K/\fp)=B(K_v)\otimes(\CO_{T_\fP}/\fP).\]
Consider the maps
\[B(K_v)\otimes(\CO_{T_\fP}/\fP)\stackrel{\kappa_v(B)}{\hookrightarrow}H^1(K_v,B_\fP)
\stackrel{\simeq}{\longrightarrow}\left(\prod_{w\mid v}H^1(H_w,A_\fp)\right)^G,\]
where the last isomorphism is given by~$j_0$ in \eqref{local-iso}. From our remark on the Galois action of~$G$, we see that the composition of these maps coincides with $\prod_{w\mid v}\kappa_w(A)$. It then follows from the definition of the Selmer groups and the isomorphisms \eqref{global-iso} and \eqref{local-iso} that $i$ sends $\Sel_\fP(B/K)$ isomorphically to $\Sel_\fp(A/H)^G$. Noting the descent exact sequence
\[0\to B(K)\otimes (\CO_{T_\fP}/\fP)\to \Sel_\fP(B/K)\to \Sha(B/K)_\fP\to 0,\]
where $\Sha(B/K)_\fP$ is the $\fP$-division part of~$\Sha(B/K)$,
the result now follows from Proposition~\ref{30} and Lemma~\ref{lem3-1}.
\end{proof}

As a consequence, we obtain:

\begin{cor}\label{sel-p-inf}
We have
\[\Sel_{\fP^\infty}(B/K)=0.\]
In particular, by replacing $\fP$ with $\fP^*$, we have
\[\Sel_{\fP^{*\infty}}(B/K)=0,\]
and both $B(K)$ and $\Sha(B/K)(\fP\fP^*)$ are finite.
\end{cor}

\begin{proof}
Consider the $\fP^\infty$-descent exact sequence:
\[0\to B(K)\otimes(T_\fP/\CO_{T_\fP})\to \Sel_{\fP^\infty}(B/K)\to \Sha(B/K)(\fP)\to 0.\]
By Proposition~\ref{2-descent-B}, the finiteness of~$B(K)$ implies that
$$B(K)\otimes(T_\fP/\CO_{T_\fP})=0.$$
Moreover, we have $\Sha(B/K)(\fP) = 0$. Therefore,
\[\Sel_{\fP^\infty}(B/K)=0.\]
The remaining claims follow by applying complex conjugation to the above equality.
\end{proof}

\section{Descent on $B$ over the fields $K$ and $F = K(B_{\fP^2})$}\label{S4}

The elliptic curve $A$ only has additive reduction at each place $w$ of~$H$ lying above~$\fq=\sqrt{-q}\CO_K$. For each such $w$, we write $A_0(H_w)$ for the subgroup of~$A(H_w)$ consisting of points with non-singular reduction modulo $w$, and put $\fC_w = A(H_w)/A_0(H_w)$. Of course, $\fC_w$ is a $\CO_K$-module, and by using N\'{e}ron's enumeration of the reduction types, the order of~$\fC_w$ is at most $4$ and $\fC_w$ has exponent dividing $2$ (see \cite[Proposition 4.5]{gross-bsd}). For a finite set $\fW$, we denote by $\#(\fW)$ the cardinality of $\fW$.

\begin{lem}\label{ne} Let~$w$ be a place of~$H$ lying above~$\fq=\sqrt{-q}\CO_K$. Then $\fC_w \simeq \CO_K/2\CO_K$ as a $\CO_K$-module. In particular, $\fC_w$ is of order 4.
\end{lem}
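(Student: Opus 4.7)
The plan is to realise $\fC_w$ as an $\CO_K/2\CO_K$-module of order $4$ by injecting the $2$-torsion of $A$ into it, and then exploiting the splitting $2\CO_K=\fp\fp^*$ to pin down the module structure.

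First, I would note that the residue characteristic at $w$ is $q$, which is odd since $q\equiv 7\bmod 8$. Consequently, the kernel of the reduction map $A(H_w)\to\wt{A}(k_w)$ onto the special fibre $\wt{A}$ of the \Neron model is a pro-$q$ group (the formal group of $A$ at $w$), so the prime-to-$q$ torsion injects into $\wt{A}(k_w)$; in particular $A_2$ does. Since $w$ divides the minimal discriminant $(-q^3)$, the reduction is additive, so $\wt{A}^0\simeq \BG_a$ over $k_w$, and this group has no nontrivial $2$-torsion as $\mathrm{char}(k_w)=q\neq 2$. Therefore the composed map
\[A_2\hookrightarrow\wt{A}(k_w)\ra\fC_w\]
is injective. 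Recalling from the proof of Proposition \ref{30} that $A_2\subset A(H)\subset A(H_w)$, and combining this with the stated bound $|\fC_w|\le 4$, one concludes that $|\fC_w|=4$ and that the injection above is an isomorphism of abelian groups. In particular $\fC_w$ has exponent $2$.

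Next, the $\CO_K$-action on $A$ preserves $A_0(H_w)$ (since $K$-endomorphisms carry the identity component to itself), so descends to $\fC_w$, and being killed by $2$, this action factors through $\CO_K/2\CO_K$. Since $2\CO_K=\fp\fp^*$ with $\fp+\fp^*=\CO_K$, the Chinese Remainder Theorem gives
\[\CO_K/2\CO_K\simeq \CO_K/\fp\oplus \CO_K/\fp^*,\]
and correspondingly $\fC_w=\fC_w[\fp]\oplus\fC_w[\fp^*]$. The reduction map $A_2\hookrightarrow\fC_w$ is $\CO_K$-equivariant, so $A_\fp$ embeds into $\fC_w[\fp]$ and $A_{\fp^*}$ into $\fC_w[\fp^*]$, forcing $|\fC_w[\fp]|\ge 2$ and $|\fC_w[\fp^*]|\ge 2$. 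Since the total order is $4$, both summands have order exactly $2$, whence
\[\fC_w\simeq \CO_K/\fp\oplus\CO_K/\fp^*=\CO_K/2\CO_K\]
as $\CO_K$-modules.

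The only substantive input is the injectivity of $A_2\to\fC_w$, which rests on the residue characteristic being odd and on the reduction being additive; everything else is formal, flowing from the splitting of $2$ in $\CO_K$ together with the $H$-rationality of $A_2$.
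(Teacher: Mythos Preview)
Your proof is correct and follows essentially the same approach as the paper's: both arguments show that $A(H_w)_2$ injects into $\fC_w$ because $A_0(H_w)$ has no $2$-torsion (the paper phrases this as ``multiplication by $2$ is an automorphism of $A_0(H_w)$''), and then conclude using $|\fC_w|\le 4$ and the $H$-rationality of $A_2$. Your final CRT step is a bit more than needed --- once you know the map $A_2\to\fC_w$ is an $\CO_K$-equivariant bijection and $A_2\simeq\CO_K/2\CO_K$ as $\CO_K$-modules, the module structure on $\fC_w$ follows directly --- but it does no harm.
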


\begin{proof} Since $w \nmid 2$ and $A$ has additive reduction at $w$, multiplication by~$2$ is an automorphism of~$A_0(H_w)$. As $\fC_w = A(H_w)_2$, it follows that $\fC_w \simeq \CO_K / 2\CO_K$, as claimed.
\end{proof}

Since $B=\Res_{H/K}A$, we have
\begin{equation}\label{ba-1}
  B(K_\fq)=\prod_{w\mid\fq}A(H_w).
\end{equation}
We define
\begin{equation}\label{ba-2}
  B_0(K_\fq)=\prod_{w\mid\fq}A_0(H_w),
\end{equation}
where the two products are taken over all places $w$ of~$H$ above~$\fq$, and we set \mbox{$C_\fq=\frac{B(K_\fq)}{B_0(K_\fq)}$}.

From the restriction of scalars for the N\'{e}ron models associated to $A/H$ and $B/K$, we obtain the following:
\begin{lem}\label{tam}
\noindent
\begin{enumerate}
  \item $C_\fq=\prod_{w\mid\fq}\fC_w$, and
  \item $C_\fq$ is a module over~$\CO_{T}\otimes_\BZ\BZ_2$, and $(C_\fq)(\fP^*)\simeq \CO_{T_{\fP^*}}/\fP^*\CO_{T_{\fP^*}}$.
\end{enumerate}
\end{lem}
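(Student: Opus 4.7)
Part (1) is immediate from \eqref{ba-1} and \eqref{ba-2}, since taking the quotient commutes with finite direct products. For part (2), the first step is to equip $C_\fq$ with the structure of an $\CO_T \otimes_\BZ \BZ_2$-module. Every element of $\mst = \End_K(B)$ extends to an endomorphism of the N\'eron model of $B$ over $\CO_{K_\fq}$ which preserves the identity component, hence acts on $C_\fq$. Lemma \ref{ne} shows each $\fC_w$ is killed by $2$, so $C_\fq$ is also $2$-torsion and thus a module over $\mst \otimes \BZ_2$. Since $[\CO_T : \mst]$ is odd (\cite[Section 13]{Gross78}), we have $\mst \otimes \BZ_2 = \CO_T \otimes \BZ_2 = \prod_{\fQ \mid 2} \CO_{T_\fQ}$, giving the desired module structure and the primary decomposition $C_\fq = \bigoplus_{\fQ \mid 2} (C_\fq)(\fQ)$.

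To pin down $(C_\fq)(\fP^*)$, I would next establish cardinalities. The $K$-prime $\fq = (\sqrt{-q})$ is principal, hence splits completely in the Hilbert class field $H/K$, so there are exactly $n = h$ primes of $H$ above $\fq$. By Lemma \ref{ne}, $|C_\fq| = 4^h$. Decomposing under the $\CO_K$-action via $\CO_K/2\CO_K \simeq \CO_K/\fp \oplus \CO_K/\fp^*$ gives $C_\fq = (C_\fq)(\fp) \oplus (C_\fq)(\fp^*)$, each summand an $\BF_2$-vector space of dimension $h$. Since $\fP^*$ lies above $\fp^*$, $(C_\fq)(\fP^*) \subseteq (C_\fq)(\fp^*)$.

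The final and most delicate step is to identify $(C_\fq)(\fp^*)$ as a \emph{free} rank-one module over $\CO_T/\fp^*\CO_T$. Its order $2^h$ matches $|\CO_T/\fp^*\CO_T| = 2^{[T:K]}$, so it suffices to produce a cyclic generator: starting from a generator of $\fC_{w_0}(\fp^*) \simeq \CO_K/\fp^*$ at a fixed $w_0 \mid \fq$, its image in $C_\fq$ should generate under the $\mst$-action, because the CM action of $\mst$ on $B(K_\fq) = \prod_{w \mid \fq} A(H_w)$ transitively permutes the local factors compatibly with the $G = \Gal(H/K)$-action on the primes above $\fq$. Once this is done, $(C_\fq)(\fp^*) \simeq \CO_T/\fp^*\CO_T = \prod_{\fQ \mid \fp^*} \CO_{T_\fQ}/\fQ^{e_\fQ}$ as $\CO_T$-modules, and projecting to the $\fP^*$-factor yields $(C_\fq)(\fP^*) \simeq \CO_{T_{\fP^*}}/\fP^*\CO_{T_{\fP^*}}$, which is $\BF_2$ since $\fP^*$ has residue degree one (Lemma \ref{s1}). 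I expect the cyclic-generator claim to be the main obstacle, requiring a concrete description of how $T$ embeds in $\End_H(\prod_\sigma A^\sigma) \otimes \BQ$ and interacts with the Galois action on the primes above $\fq$.
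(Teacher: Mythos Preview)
Your setup for part (2) is correct and you have put your finger on the right issue: the whole lemma reduces to showing that $C_\fq$ is \emph{free of rank one} over $\CO_T/2\CO_T$ (equivalently, cyclic as an $\CO_T$-module), after which the $\fP^*$-component drops out immediately. Your proposed route to cyclicity, however---producing an explicit generator by analysing how $\mst$ permutes the local factors $A(H_w)$ via the Galois action on the primes $w\mid\fq$---is harder than necessary, and you yourself flag it as the main obstacle.

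There is a cleaner way, which is what the paper's compressed proof seems to have in mind when it invokes ``$[T:K]=[H:K]=h$'' together with Lemma~\ref{ne} and ``the structure theory for modules over principal ideal domains''. Namely, the proof of Lemma~\ref{ne} already shows that multiplication by $2$ is an automorphism of each $A_0(H_w)$, so $B_0(K_\fq)[2]=0$ and the reduction map $B(K_\fq)[2]\to C_\fq$ is an injective $\mst$-module homomorphism. Since $A_2\subset A(H)\subset A(H_w)$ for every $w$, one has $B(K_\fq)[2]=\prod_{w\mid\fq}A(H_w)[2]=\prod_{w\mid\fq}A_2$, of order $4^h$ (here $[H:K]=h$ counts the primes above the principal ideal $\fq$). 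This equals $|B[2](\overline{K})|=4^{\dim B}$, so in fact $B[2]\subset B(K_\fq)$ and the map $B[2]\to C_\fq$ is an $\mst$-equivariant isomorphism. Now the structure theory enters: the $2$-adic Tate module $T_2B$ is a faithful, torsion-free module over $\CO_T\otimes_\BZ\BZ_2=\prod_{\fQ\mid 2}\CO_{T_\fQ}$ of $\BZ_2$-rank $2h=[T:\BQ]$, and since each $\CO_{T_\fQ}$ is a discrete valuation ring this forces $T_2B$ to be free of rank one. Hence $B[2]\simeq\CO_T/2\CO_T$ as $\CO_T$-modules, and therefore so is $C_\fq$. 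Projecting to the $\fP^*$-component and using $\ord_{\fP^*}(2)=1$ gives $(C_\fq)(\fP^*)\simeq\CO_{T_{\fP^*}}/\fP^*\CO_{T_{\fP^*}}$.

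In short: rather than building a generator by hand, recognise $C_\fq$ as the $2$-torsion of the CM abelian variety $B$; cyclicity is then a standard consequence of the rank-one structure of the Tate module.
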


\begin{lem}\label{t2.1}\noindent
\begin{enumerate}[(i)]
\item[(1)]  $H^1(K_\fq, B)(\fP)$ is finite of order $2$, and
\item[(2)] $H^1(K_\fp, B)(\fP)$ is finite of order equal to $\mid(1-\phi(\fp)/2)\mid_\fP^{-1}$.
\end{enumerate}
Here, $\mid\cdot\mid_\fP$ is the multiplicative valuation on $T_\fP$, normalised so that $|2|_\fP=2^{-1}$.
\end{lem}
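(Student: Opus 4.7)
The approach is to apply local Tate duality for the self-dual abelian variety $B$ and to analyse $B(K_v)$ via the N\'eron model and the formal group in each case.

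For part (i), at the place $\fq$ of bad (additive) reduction whose residue characteristic $q$ is coprime to $2$, local Tate duality will give an isomorphism of topological groups between $H^1(K_\fq, B)$ and the Pontryagin dual of the profinite group $B(K_\fq)$. Passing to $\fP$-primary parts reduces the problem to computing the pro-$\fP$ completion of $B(K_\fq)$. In the exact sequence $0 \to B_0(K_\fq) \to B(K_\fq) \to C_\fq \to 0$, the identity component $B_0(K_\fq)$ is an extension of the additive reduction $\tilde B_0(\BF_\fq) \simeq \BG_a^h(\BF_q)$ by the formal group, both of which are pro-$q$ groups and hence uniquely $\fP$-divisible. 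Therefore the pro-$\fP$ completion of $B(K_\fq)$ coincides with $C_\fq(\fP)$. The same $\mathcal{O}_T \otimes_\BZ \BZ_2$-module argument as in Lemma \ref{tam}(ii), applied to $\fP$ in place of $\fP^*$, will identify $C_\fq(\fP)$ with $\mathcal{O}_{T_\fP}/\fP\mathcal{O}_{T_\fP} \simeq \BZ/2\BZ$, giving the claimed order $2$.

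For part (ii), $\fp$ is a prime of good ordinary reduction (since $B$ has CM by $T$ and $\fp$ splits in $K$). Here I would use the $\mathcal{O}_T$-equivariant form of local Tate duality: under the Weil pairing the finite group schemes $B_{\fP^n}$ and $B_{\fP^{*n}}$ are Cartier dual, so $H^1(K_\fp, B)(\fP)$ will pair perfectly with the pro-$\fP^*$ completion of $B(K_\fp)$. Since $\fP^*$ lies over $\fp^* \ne \fp$, it does not appear in the decomposition $\mathcal{O}_T \otimes_{\mathcal{O}_K} \mathcal{O}_{K_\fp} = \prod_{\mathfrak{Q} \mid \fp} \mathcal{O}_{T_{\mathfrak{Q}}}$, and the endomorphisms in $\fP^* \setminus \fP^{*2}$ therefore act invertibly on the formal group at $\fp$, and hence on the entire $\BZ_2$-free part of $B(K_\fp)$. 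Consequently the pro-$\fP^*$ completion of $B(K_\fp)$ is finite and is controlled by the $\fP^*$-primary piece of $\tilde B(\BF_\fp)$. Its cardinality is governed by the $\fP$-part of the Euler factor of the Hecke character $\bar\phi$ at $\fp$: using the relation $\phi(\fp)\bar\phi(\fp) = N\fp = 2$, this Euler factor factorises as $(1 - \phi(\fp)/2)(1 - \bar\phi(\fp)/2)$, and since $\phi(\fp)$ generates $\fP$ (by ordinariness) while $\bar\phi(\fp) \in \mathcal{O}_{T_\fP}^\times$, only the first factor contributes, yielding the desired value $\mid 1 - \phi(\fp)/2 \mid_\fP^{-1}$.

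The principal obstacle is part (ii): a naive application of Tate duality would pair $H^1(K_\fp, B)(\fP)$ with the pro-$\fP$ completion of $B(K_\fp)$ itself, which is infinite because of the positive $\BZ_2$-rank coming from the formal group at $\fp$. The finiteness therefore hinges on using the $\mathcal{O}_T$-equivariant refinement that pairs $\fP$-primary cohomology with the $\fP^*$-primary part of $B(K_\fp)$, combined with the essential observation that $\fP^*$ is coprime to the residual structure at $\fp$ and so annihilates the infinite formal-group contribution modulo torsion. Once this refined duality is in place, the remainder of part (ii) is a Lubin--Tate-style Euler-factor calculation visible directly from the $\fP$-adic valuations of $\phi(\fp)$ and $\bar\phi(\fp)$.
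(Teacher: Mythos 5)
Your overall strategy — local Tate duality for the self-dual $B$, component groups at the additive place $\fq$, and a good-reduction point count at $\fp$ — is exactly the paper's. But there are two genuine issues, the second of which breaks the derivation in part (ii).

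First, a consistency issue in part (i). You pair $H^1(K_\fq,B)(\fP)$ with the pro-$\fP$ completion of $B(K_\fq)$, but then in part (ii) you (correctly) observe that the $\CO_T$-equivariant Weil pairing intertwines the CM action with its complex conjugate, so the $\fP$-primary part of $H^1$ is dual to the pro-$\fP^*$ completion. The same is true at $\fq$: the paper pairs $H^1(K_\fq,B)_{\pi^n}$ with $B(K_\fq)/\pi^{*n}B(K_\fq)$, reducing to $(C_\fq)(\fP^*)$, not $(C_\fq)(\fP)$. You still get order $2$ because both parts of $C_\fq$ have order $2$, but the proposal should be internally consistent, and for (ii) the distinction is essential.

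Second, the ``Euler factor'' step in part (ii) does not hold up. The Euler factor of the Hecke character $\ov{\phi}$ at $\fp$ is the single factor $1-\ov{\phi}(\fp)/2$; it does not ``factorise as $(1-\phi(\fp)/2)(1-\ov{\phi}(\fp)/2)$'' — that product is the local factor of $L(B/K,s)$ at $\fp$. More seriously, the claim ``only the first factor contributes'' is false: from $v_\fP(\phi(\fp))=1$ one gets $\ov{\phi}(\fp)/2=1/\phi(\fp)$ with $v_\fP=-1$, so $v_\fP\bigl(1-\ov{\phi}(\fp)/2\bigr)=-1$ and the second factor contributes $2^{-1}$, not a $\fP$-unit. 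Your ``$\ov{\phi}(\fp)\in\CO_{T_\fP}^{\times}$'' is correct, but that does not make $1-\ov{\phi}(\fp)/2$ a unit. Following your route literally would therefore produce $\tfrac12\,|1-\phi(\fp)/2|_\fP^{-1}$, off by a factor of $2$. The correct chain is: $\varprojlim_n B(K_\fp)/\pi^{*n}B(K_\fp)\cong\tilde B(k(K_\fp))(\fP^*)$, which has cardinality $|\phi(\fp)-1|_{\fP^*}^{-1}=|\ov{\phi}(\fp)-1|_\fP^{-1}$; then since $\ov{\phi}(\fp)$ is a $\fP$-unit, $1-\phi(\fp)/2=1-1/\ov{\phi}(\fp)=(\ov{\phi}(\fp)-1)/\ov{\phi}(\fp)$ has the same $\fP$-valuation as $\ov{\phi}(\fp)-1$. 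Also, ``$\phi(\fp)$ generates $\fP$'' is not quite right (the principal ideal $(\phi(\fp))$ in $\CO_T$ generally involves other primes above $\fp$ as well); what the argument needs, and what is true, is $v_\fP(\phi(\fp))=1$.
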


\begin{proof}
The proof uses Tate local duality.
Note that $B$ is a self-dual abelian variety (see~\cite{gross-bsd}).
Since Lemma~\ref{tam} implies that $(C_\fq)(\fP^*)$ is annihilated by~$\fP^*$, and $B$ has additive reduction at $\fq$,
Tate local duality shows that the Pontryagin dual of~$H^1(K_\fq, B)(\fP)$ is isomorphic to $B_{\fP^*}(K_\fq)$. This proves (1). For  (2), Let~$\kappa(K_\fp)$ be the residue field of~$K_\fp$. Since $B$ has good reduction at $\fp$, we Let~$\widetilde{B}$ be the reduction abelian variety of~$B$ modulo $\fp$. As we have a global minimal model of~$A$ over~$H$,
we can reduce $B(K_\fp)$ using the relation \eqref{ba-1}. By Tate local duality, the Pontryagin dual of~$H^1(K_\fp,B)(\fP)$ is equal to $\widetilde{B}(\kappa(K_\fp))(\fP^*)$.
By the theory of complex multiplication, $\phi(\fp)$ is the unique element of the ring of endomorphisms $\mst = \End_K(B)$ whose reduction modulo $\fp$ is the Frobenius endomorphism of~$\widetilde{B}$ over~$\kappa(K_\fp)$. Noting that $\phi(\fp)\ov{\phi}(\fp)=2$ and $\phi(\fp)-1$ is a $\fP$-unit, we obtain
$$
\left|\#(\tilde{B}(k(K_\fp))(\fP^*))\right|^{-1}_\fP = \mid(\ov{\phi}(\fp) - 1)(\phi(\fp)-1)\mid^{-1}_{\fP}=\left|\left(1-\frac{\phi(\fp)}{2}\right)\right|^{-1}_\fP.
$$
This completes the proof of (2).
\end{proof}

Let~$L$ be an algebraic extension over~$K$, and let~$S$ be a finite set of primes of~$K$. We define
\[\Sel^S_{\fP^\infty}(B/L)=\ker\left(H^1(L, B_{\fP^\infty})\ra \prod_{v\nmid S}H^1(L_v, B)\right),\]
where the product runs over all primes $v$ of~$L$ not lying above the primes in~$S$ and $L_v$ denotes the compositum field of all completions at~$v$ of finite extensions over~$K$ contained in~$L$. In the following, we shall simply write $\Sel'_{\fP^\infty}(B/L)$ for~$\Sel^{\{\fp\}}_{\fP^\infty}(B/L)$ and $\Sel^\CW_{\fP^\infty}(B/L)$ for~$\Sel^{\{\fp, \fq\}}_{\fP^\infty}(B/L)$.

\begin{prop}\label{t2.2} We have
\begin{align*}
\left|\#\left(\Sel_{\fP^\infty}^\CW(B/K)\right)\right|^{-1}_\fP =\left|\left(1-\frac{\phi(\fp)}{2}\right)\right|_\fP^{-1}.
\end{align*}
\end{prop}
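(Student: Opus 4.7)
The plan is to apply Poitou--Tate global duality in order to relate the enlarged Selmer group $\Sel^\CW_{\fP^\infty}(B/K)$ to the standard one, which vanishes by Corollary \ref{sel-p-inf}. Since $B$ is self-dual, the Pontryagin dual of $B_{\fP^\infty}$ under the Weil pairing is the $\fP^*$-adic Tate module $T_{\fP^*}B$. The Greenberg--Wiles exact sequence comparing the two Selmer structures (standard vs.\ relaxed at $\CW = \{\fp,\fq\}$) reads
$$
0 \to \Sel_{\fP^\infty}(B/K) \to \Sel^\CW_{\fP^\infty}(B/K) \to \bigoplus_{v \in \CW} H^1(K_v, B)(\fP) \to \Sel(T_{\fP^*}B/K)^\vee \to \Sel_{\mathrm{str}}(T_{\fP^*}B/K)^\vee \to 0,
$$
where $\Sel_{\mathrm{str}}(T_{\fP^*}B/K)$ is the subgroup of $\Sel(T_{\fP^*}B/K)$ consisting of classes whose images in $H^1(K_v, T_{\fP^*}B)$ vanish for both $v = \fp$ and $v = \fq$. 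Corollary \ref{sel-p-inf} collapses the leftmost term.

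The heart of the argument is computing the two dual Selmer groups. By Remark \ref{finiteness}, $B(K)$ is finite and $\Sha(B/K)(\fP^*) = 0$, so $\Sel(T_{\fP^*}B/K) \simeq B(K)[\fP^{*\infty}]$. The $\fP^*$-analogue of Proposition \ref{2-descent-B} gives $B(K)[\fP^*] \simeq \BZ/2\BZ$, and one verifies (using the cyclicity of the $\CO_{T_{\fP^*}}$-module structure) that $B(K)[\fP^{*\infty}] \simeq \BZ/2\BZ$, generated by a $K$-rational $2$-torsion point $P$. I would then show $\Sel_{\mathrm{str}}(T_{\fP^*}B/K) = 0$ by arguing that the local Kummer class of $P$ is non-trivial at $\fq$. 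Indeed, since $B$ has additive reduction at $\fq$ and the residue characteristic is odd, the identity component of the special fiber carries no $2$-torsion, so $P$ must reduce to a non-identity component of $C_\fq$; combined with $\fP^* P = 0$ and Lemma \ref{tam}, this forces the image of $P$ in $(C_\fq)(\fP^*) \simeq \BZ/2\BZ$ to be non-trivial, preventing $P$ from lying in $\fP^* B(K_\fq)$.

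Plugging these into the exact sequence and using Lemma \ref{t2.1}, the restriction map has cokernel of order $\#\Sel(T_{\fP^*}B/K) / \#\Sel_{\mathrm{str}}(T_{\fP^*}B/K) = 2$, so
$$
|\#\Sel^\CW_{\fP^\infty}(B/K)|_\fP^{-1} = \frac{\#H^1(K_\fp, B)(\fP) \cdot \#H^1(K_\fq, B)(\fP)}{2} = \left|1 - \frac{\phi(\fp)}{2}\right|_\fP^{-1}.
$$
The principal technical obstacle is verifying the non-triviality of the local Kummer class of $P$ at $\fq$, which requires a careful analysis of the reduction map $B(K_\fq) \to C_\fq$ via the restriction of scalars decomposition $C_\fq = \prod_{w \mid \fq} \fC_w$ (Lemma \ref{tam}) and the explicit structure of each $\fC_w$ from Lemma \ref{ne}.
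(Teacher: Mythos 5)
Your proposal is correct and is essentially the same argument as the paper's, packaged in the Greenberg--Wiles formulation of Poitou--Tate duality rather than in terms of the map $s_n$ and its limit $\fs_\infty$. The paper's cokernel of $u_\infty$ is dual to $\Im(\fs_\infty) \simeq \Sel(T_{\fP^*}B/K)/\Sel_{\mathrm{str}}(T_{\fP^*}B/K)$, which is exactly the quotient you compute; both identify $\varprojlim_n\Sel_{\pi^{*n}}(B/K) \simeq B(K)(\fP^*)\simeq\BZ/2\BZ$ using Remark \ref{finiteness}, and both need the injectivity of the localization map into $\prod_{v\in\CW}B(K_v)\otimes\CO_{T_{\fP^*}}$.

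The one place where you work noticeably harder than the paper is the vanishing of $\Sel_{\mathrm{str}}(T_{\fP^*}B/K)$. The paper dispatches this with ``$\fs_\infty$ is injective because $\CW$ is non-empty'': since $B(K_v)(\fP^*)$ is finite for any fixed finite place $v$, a nonzero $\fP^*$-torsion point cannot lie in $\bigcap_n\pi^{*n}B(K_v)$, so the map from $B(K)(\fP^*)$ to the $\fP^*$-adic completion of $B(K_v)$ is already injective for a single $v\in\CW$. Your more explicit route --- showing the image of the generator $P$ of $B(K)(\fP^*)$ in $(C_\fq)(\fP^*)\simeq\CO_{T_{\fP^*}}/\fP^*$ is nonzero because $P$ is $2$-torsion, $v=\fq$ has odd residue characteristic so $B_0(K_\fq)$ has no $2$-torsion, and $\fP^*$ annihilates $(C_\fq)(\fP^*)$ by Lemma \ref{tam} --- is a valid and self-contained proof of the same injectivity, and it is a useful consistency check against the structure of $C_\fq$, but it is heavier than necessary. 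Either way, once the cokernel is identified as order $2$, the arithmetic with Lemma \ref{t2.1} and Corollary \ref{sel-p-inf} gives the stated valuation, exactly as in the paper.
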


\begin{proof} Let~$\pi$ be a fixed element of~$\mst$ satisfying the factorisation $(\pi)=\fP^r$ for some integer $r\geq 1$. For each integer $n \geq 1$, we have the short exact sequence
\begin{equation}\label{2.5}
0 \ra \Sel_{\pi^n}(B/K) \ra \Sel_{\pi^n}^\CW(B/K)\xto{u_n} \prod_{v\in\CW}H^1(K_v,B)_{\pi^n},
\end{equation}
where $\Sel^\CW_{\pi^n}(B/K)$ is defined in a similar manner for~$B_{\pi^n}$, and we write $u_n$ for the right hand homomorphism in this sequence. On the other hand, by the definition of the Selmer group $\Sel_{\pi^{*n}}(B/K)$, we have the natural homomorphism
\begin{equation}\label{2.6}
s_n:\Sel_{\pi^{*n}}(B/K)\ra \prod_{v\in\CW}B(K_v)/\pi^{*n}B(K_v).
\end{equation}
Note that the groups on the right of \eqref{2.5} and \eqref{2.6} are dual to each other by Tate local duality and self-dualness of~$B$. By the modified Poitou--Tate sequence  (see, for example, \cite[Section~3.3]{bas}), we conclude that
$\Coker(u_n)$ is equal to the Pontryagin dual of~$\Im(s_n)$ for all $n \geq 1$. Hence, writing $u_\infty$ for the inductive limit of the maps $u_n$ as $n \to \infty$, it follows that
$\Coker(u_\infty)$ is dual to the image of the map $\fs_\infty = \varprojlim_{n}s_n$, where
\begin{align*}\label{2.7}
\fs_\infty:\varprojlim_{n}\Sel_{\pi^{*n}}(B/K)\ra \prod_{v\in\CW}B(K_v)\otimes_{\mst}\CO_{T_{\fP^*}}.
\end{align*}
From Corollary~\ref{sel-p-inf}, both $B(K)$ and $\Sha(B/K)(\fP^*)$ are finite. It follows that $\varprojlim_{n}\Sel_{{\pi^*}^n}(B/K)$ is isormorphic to $B(K)(\fP^*)$, which is cyclic of order 2.
Moreover, $\fs_\infty$ is injective because $\CW$ is non-empty. Thus, the cokernel of~$u_\infty$ has order equal to $\#(B(K)(\fP^*))$. The result now follows on taking the inductive limit over all $n \geq 1$ of the exact sequences \eqref{2.5} and combining with Lemma~\ref{t2.1} and Corollary~\ref{sel-p-inf}.
\end{proof}

Let~$F=K(B_{\fP^2})$. One can easily show that the degree of~$F$ over~$K$ is equal to~$2$. The next proposition is Theorem 2.4 of  \cite{CL}.

\begin{prop}\label{good-red}
The abelian variety $B$ has good reduction everywhere over~$F$.
\end{prop}

Let~$K_\infty$ be the unique $\BZ_2$-extension over~$K$ which is unramified outside of the prime~$\fp$. Then we have $$F_\infty=K(B_{\fP^\infty})=K_\infty(B_{\fP^2}).$$
Let us denote
\[\msg=\Gal(F_\infty/K)=\Gamma\times\Delta,\]
where $\Gamma\simeq \BZ_2$ and $\Delta=\Gal(F_\infty/K_\infty)\simeq\Gal(F/K)$ is of order $2$. We write $\delta$ for the generator of~$\Delta$, so that $\delta$ acts on $B_{\fP^\infty}$ by multiplication by~$-1$. We refer the reader to \cite{CL} for more details on these fields and Galois groups.

Since $\Sel'_{\fP^\infty}(B/F)$ is a module over~$\Gal(F/K)$, we denote by~$\Sel'_{\fP^\infty}(B/F)^\Delta$ the subgroup of elements in~$\Sel'_{\fP^\infty}(B/F)$ fixed by~$\Delta$.

\begin{prop}\label{h-seq}
We have the exact sequence
\begin{equation}\label{seq-n}
  0\ra H^1(F/K, B(F)_{\fP^\infty})\ra \Sel^\CW_{\fP^\infty}(B/K)\ra \Sel'_{\fP^\infty}(B/F)^\Delta\ra 0.
\end{equation}

\end{prop}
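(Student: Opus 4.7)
The plan is to deduce the claimed exact sequence from the Hochschild--Serre inflation--restriction sequence for $\Delta = \Gal(F/K)$ acting on $B_{\fP^\infty}$, namely
\[
0 \to H^1(\Delta, B(F)_{\fP^\infty}) \xrightarrow{\mathrm{inf}} H^1(K, B_{\fP^\infty}) \xrightarrow{\mathrm{res}} H^1(F, B_{\fP^\infty})^\Delta \to H^2(\Delta, B(F)_{\fP^\infty}),
\]
and then to intersect with the local Selmer conditions defining $\Sel^\CW_{\fP^\infty}(B/K)$ and $\Sel'_{\fP^\infty}(B/F)$.

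First, I would check that $\mathrm{inf}$ sends $H^1(\Delta, B(F)_{\fP^\infty})$ into $\Sel^\CW_{\fP^\infty}(B/K)$. Fix a prime $v$ of $K$ outside $\CW = \{\fp, \fq\}$. Since $F = K(B_{\fP^2})$ and $B$ has good reduction at $v$, the extension $F/K$ is unramified at $v$, so the localization at $v$ of any inflation class is an unramified cocycle in $H^1(K_v, B_{\fP^\infty})$. At a prime of good reduction, the usual local Kummer argument shows that unramified classes in $H^1(K_v, B_{\fP^\infty})$ lie in the image of the Kummer map from $B(K_v) \otimes_{\CO_T} T_\fP/\CO_{T_\fP}$, hence vanish in $H^1(K_v, B)(\fP)$. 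This gives the Selmer condition at every $v \notin \CW$, as required.

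Next, I would check that $\mathrm{res}$ carries $\Sel^\CW_{\fP^\infty}(B/K)$ into $\Sel'_{\fP^\infty}(B/F)^\Delta$. Let $c \in \Sel^\CW_{\fP^\infty}(B/K)$ and let $w$ be a prime of $F$ not lying above $\fp$. If $w$ lies above some $v \notin \CW$, then functoriality of restriction together with $c_v = 0$ in $H^1(K_v, B)(\fP)$ yields $\mathrm{res}(c)_w = 0$ in $H^1(F_w, B)(\fP)$. The delicate case is $w \mid \fq$: by Theorem \ref{good-red}, $B$ has good reduction at $w$, so the local cohomology $H^1(F_w, B)(\fP)$ is controlled by the reduction, and Lemma \ref{t2.1}(i) together with the structure of $F_w/K_\fq$ (arising from the $\fP^2$-division field) should force the restriction map $H^1(K_\fq, B)(\fP) \to H^1(F_w, B)(\fP)$ to annihilate the image of the global Selmer class.

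The main obstacle is surjectivity of $\mathrm{res}$ onto $\Sel'_{\fP^\infty}(B/F)^\Delta$. Given $c' \in \Sel'^\Delta$, lifting it to $H^1(K, B_{\fP^\infty})$ a priori encounters an obstruction in $H^2(\Delta, B(F)_{\fP^\infty})$, which is nontrivial: since $\delta$ acts as $-1$, this group is isomorphic to $B(F)_{\fP^\infty}[2] = B_\fP \simeq \BZ/2\BZ$. My plan to overcome this is to exploit the fact that $\Sel^\CW$ imposes no local condition at the primes of $\CW$, so there is complete local freedom at $\fp$ and $\fq$ to adjust any candidate lift. A Poitou--Tate type compatibility between the global $H^2$ obstruction and local invariants at the primes in $\CW$, combined with the descent information from Proposition \ref{2-descent-B} and the local calculations of Lemma \ref{t2.1}, should allow one to modify a naive lift of $c'$ so that the resulting class lies in $\Sel^\CW_{\fP^\infty}(B/K)$. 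This surjectivity argument, together with the local verifications above, will yield the sequence \eqref{seq-n}.
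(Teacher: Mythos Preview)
Your framework via inflation--restriction is the same as the paper's, and the identification of the kernel as $H^1(\Delta,B(F)_{\fP^\infty})$ together with the verification that restriction carries $\Sel^\CW$ into $\Sel'^\Delta$ is essentially correct (though your treatment of the prime $\fq$ is only a sketch; the paper makes this precise by dualising: the restriction $H^1(K_\fq,B)(\fP)\to H^1(F_w,B)(\fP)$ is dual to the norm $B(F_w)/2\to B(K_\fq)/2$, and since $\fq$ is totally ramified in $F$ with $B$ of good reduction at $w$, this norm is multiplication by $2$ on $\tilde B(k(F_w))/2$, hence zero).

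The real gap is your surjectivity argument. You correctly compute that the transgression target $H^2(\Delta,B(F)_{\fP^\infty})\simeq B_\fP$ is nonzero, but your plan to kill the obstruction is not sound. The obstruction lies in the group cohomology $H^2(\Delta,\,\cdot\,)$ of the finite group $\Delta$; it governs whether \emph{any} class in $H^1(K,B_{\fP^\infty})$ restricts to $c'$, and has nothing to do with which local conditions one imposes afterwards. ``Local freedom at $\CW$'' lets you relax Selmer conditions on a lift \emph{once it exists}, but it cannot manufacture a lift when the transgression of $c'$ is nonzero. Likewise, there is no Poitou--Tate duality available for $H^2$ of a finite quotient group that would let you cancel this obstruction against local data; Proposition~\ref{2-descent-B} and Lemma~\ref{t2.1} are statements about $\fP$-Selmer orders and local $H^1$'s, not about the transgression map.

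What the paper does instead is to show directly that every $\xi\in\Sel'_{\fP^\infty}(B/F)^\Delta$ has vanishing transgression, by constructing an explicit lift. The point you are missing is the interplay between two facts: (a) $\fq$ is \emph{ramified} in $F/K$, so one can pick $\theta\in I_\fq$ with $\theta|_F=\delta$; and (b) the Selmer condition on $\xi$ at the prime above $\fq$ means, since $B$ has good reduction there over $F$, that $\xi$ is \emph{unramified} at $\fq$, so one may choose a cocycle $g$ for $\xi$ vanishing on $I_\fq\cap\Gal(\ov K/F)$, in particular on all even powers of $\theta$. After adjusting $g$ by a coboundary to make it literally $\Delta$-invariant (using that $(1-\delta)$ on coboundaries is multiplication by $2$), one can then \emph{define} a cocycle $h$ on $\Gal(\ov K/K)$ by $h(\sigma\theta^i)=f(\sigma)$ and check the cocycle relation by hand. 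This explicit extension is the substance of the surjectivity; your proposal does not contain it or an equivalent mechanism.
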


\begin{proof}
The assertion, except for the surjectivity, will follow from the commutative diagram
\begin{equation}\label{tu-001}
  \xymatrix{
  0  \ar[r]^{} & \Sel^\CW_{\fP^\infty}(B/K) \ar[d]_{j_1} \ar[r]^{} & H^1(K, B_{\fP^\infty}) \ar[d]_{j_2} \ar[r]^{} & \prod_{v\notin \CW}H^1(K_{v}, B)(\fP) \ar[d]^{j_3} \\
  0 \ar[r]^{} & \Sel'_{\fP^\infty}(B/F)^\Delta  \ar[r]^{} & (H^1(F, B_{\fP^\infty}))^\Delta \ar[r]^{} & \left(\prod_{w\notin \{\fp\}}H^1(F_{w}, B)(\fP)  \right)^\Delta\text{,}}
\end{equation}
where $j_2$ is the restriction map and $j_1$ is induced by the restriction of~$j_2$ to $ \Sel^\CW_{\fP^\infty}(B/K)$. By a theorem of Mazur, $j_3$ is injective, so we get
\[{\rm ker}j_1\simeq {\rm ker}j_2=H^1(F/K, B(F)_{\fP^\infty}).\]
From Proposition~\ref{good-red}, $\fq$ is totally ramified in~$F$ and we denote by~$w$ the unique prime of~$F$ above~$\fq$.
Applying the Tate local duality to the local restriction map
\[r_\fq:H^1(K_\fq, B)(\fP) \to H^1(F_w, B)(\fP),\]
we can easily show that $r_\fq=0$. Therefore, we get
\[ j_2\left(\Sel_{\fP^\infty}^\CW(B/K)\right)\subset (\Sel_{\fP^\infty}'(B/F))^\Delta.\]
By applying the snake lemma to the diagram \eqref{tu-001}, we obtain the exact sequence
\begin{align*}
0\ra H^1(F/K, B(F)_{\fP^\infty})\ra \Sel^\CW_{\fP^\infty}(B/K)\stackrel{j_1}{\longrightarrow}\Sel'_{\fP^\infty}(B/F)^\Delta.
\end{align*}
To complete the proof, we remain to proving that $j_1$ is surjective. The proof goes similarly as \cite[Proposition 3.8]{gon-bsd}, since there are subtle things in the proof, we write a detailed proof below.
We simply write $v=\fq$ in~$\CW$, and let~$I_v$ be the inertia subgroup of some fixed prime of the algebraic closure of~$K$ above~$v$. Since~$v$ is ramified in~$F/K$, we can find
an element $\theta$ of~$I_v$ whose restriction to $F$ is $\delta\big|_{F}$. We now take~$\xi$ to be any element of~$\Sel_{\fP^\infty}'(B/F)^\Delta$. We must show that there
exists a cohomology class $\rho$ in~$H^1(K, B_{\fP^\infty})$ such that $j_2(\rho) = \xi$. Note that any such $\rho$ must automatically lie in~$\Sel^\CW_{\fP^\infty}(B/K)$ by the injectivity of~$j_3$. Since $B$ has good reduction everywhere over~$F$, we can choose a cocycle
representative $g$ of~$\xi$ which is trivial on $I_v\cap \Gal(\ov{K}/F)$, whence $g(\theta^{2n}) = 0$ for all integers $n$, as~$\theta^{2n}$ lies in~$I_v\cap \Gal(\ov{K}/F)$.
For each $z$ in~$B_{\fP^\infty}$, Let~$d(z)$ denote the 1-coboundary on~$\Gal(\ov{K}/F)$ defined by~$d(z)(\sigma) = (\sigma -1)z$. Note that
$$
 (\delta d)(z) = \theta(\theta^{-1}\sigma\theta-1)z = (\sigma-1)\delta(z) = -d(z).
$$
Now, since the cohomology class $\xi$ is fixed by~$\Delta$, we have $(1-\delta)g = d(z)$ for some $z \in B_{\fP^\infty}$. Let~$u$ be any element
of~$B_{\fP^\infty}$ such that $2u = z$. We claim that the equivalent cocycle $f$ defined by~$f = g - d(u)$ is then actually invariant under the
action of~$\Delta$. Indeed, we have
$$
 (1-\delta)f = (1-\delta)g - (1-\delta)d(u) = d(z) - 2d(u) = 0.
$$
Note also that we have $f(\theta^{2n})=0$ for all integers $n$. Now, every element $\tau$ in~$\Gal(\ov{K}/K)$ can be written in the form $\tau = \sigma\theta^i$, where $\sigma$ is in~$\Gal(\ov{K}/F)$ and $i\in \{0, 1\}$. We define the map $h: \Gal(\ov{K}/K) \to B_{\fP^\infty}$ by~$h(\tau) = f(\sigma)$.  We claim that $h$ is indeed a 1-cocycle, that is, taking $\tau_k = \sigma_k\theta^{i_k}$ for~$k\in \{1,2\}$, we must show that
\begin{equation}\label{16}
 h(\tau_1\tau_2) = h(\tau_1) + \tau_1h(\tau_2).
\end{equation}
Note first that, for any $\sigma$ in~$\Gal(\ov{K}/F)$ and any integer $m$, we have $h(\sigma\theta^m) = f(\sigma)$ since $f$ vanishes on the even powers of~$\theta$.
Now, we have $\tau_1\tau_2 = \sigma_1\sigma_2' \theta^{i_1+i_2}$, where $\sigma_2' = \theta^{i_1}\sigma_2\theta^{-i_1}$, whence it follows that
$$
 h(\tau_1\tau_2)=f(\sigma_1\sigma_2' ) = f(\sigma_1) + \sigma_1f(\sigma_2').
$$
But by construction, the cocycle $f$ is fixed by~$\Delta$, and so we have $f(\sigma_2') = \theta^{i_1}f(\sigma_2)$, and the equality \eqref{16} follows.
\end{proof}

Combining Propositions \ref{t2.2} and \ref{h-seq}, and noting that
$$B(K)(\fP^*)\simeq \BZ/2\BZ \quad \text{and} \quad H^1(F/K, B(F)(\fP))\simeq B_{\fP^2}/B_\fP,$$
we obtain the following corollary.

\begin{cor}\label{sel'-f}
The Selmer group $\Sel'_{\fP^\infty}(B/F)^\Delta$ is finite, and we have
\[
\left|\#\left(\Sel'_{\fP^\infty}(B/F)^\Delta\right)\right|^{-1}_\fP =\frac{1}{2}  \left|\left(1-\frac{\phi(\fp)}{2}\right)\right|_\fP^{-1}.
\]
\end{cor}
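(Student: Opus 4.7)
The plan is to derive the corollary directly from the exact sequence of Proposition \ref{h-seq} combined with Proposition \ref{t2.2}, after pinning down the size of the kernel $H^1(F/K, B(F)_{\fP^\infty})$.

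First I would identify $B(F)_{\fP^\infty}$. Since $F = K(B_{\fP^2})$, clearly $B_{\fP^2} \subseteq B(F)_{\fP^\infty}$. Conversely, because $\Gal(F_\infty/F) = \Gamma \simeq \BZ_2$ is non-trivial and $F_\infty = K(B_{\fP^\infty})$, no $\fP^n$-torsion with $n \geq 3$ can already be rational over $F$; hence $B(F)_{\fP^\infty} = B_{\fP^2}$. Next I would compute $H^1(\Delta, B_{\fP^2})$. The generator $\delta$ of $\Delta$ acts on $B_{\fP^\infty}$ as multiplication by $-1$, so for any $\Delta$-stable submodule $M$ the formula
\[
H^1(\Delta, M) \;=\; \ker(1+\delta)/(\delta-1)M
\]
for the cohomology of a cyclic group of order two collapses to $M/2M$ (since $1+\delta = 0$ and $\delta - 1 = -2$ on $M$). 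Taking $M = B_{\fP^2}$ yields $H^1(\Delta, B_{\fP^2}) \simeq B_{\fP^2}/2B_{\fP^2} = B_{\fP^2}/B_\fP$, a group of order $2$, recovering the identification recorded immediately before the statement of the corollary.

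Finally I would combine these inputs. Because $\Sel^\CW_{\fP^\infty}(B/K)$ is $2$-primary, the equality $|\#\Sel^\CW_{\fP^\infty}(B/K)|_\fP^{-1} = |1 - \phi(\fp)/2|_\fP^{-1}$ of Proposition \ref{t2.2} determines its cardinality outright and, in particular, shows it is finite. The exact sequence of Proposition \ref{h-seq} then forces $\Sel'_{\fP^\infty}(B/F)^\Delta$ to be finite as a quotient, and taking cardinalities in that sequence gives
\[
\#\Sel'_{\fP^\infty}(B/F)^\Delta \;=\; \#\Sel^\CW_{\fP^\infty}(B/K)\bigm/\#H^1(F/K, B(F)_{\fP^\infty}).
\]
Passing to $\fP$-adic absolute values, using $|2|_\fP = 1/2$ together with the value from Proposition \ref{t2.2}, yields the claimed identity. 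There is no real obstacle: the substantive work lies in Propositions \ref{t2.2} and \ref{h-seq}, and the corollary is pure bookkeeping once $H^1(\Delta, B_{\fP^2})$ has been computed.
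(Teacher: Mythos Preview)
Your proposal is correct and follows essentially the same approach as the paper: the paper derives the corollary by combining Proposition~\ref{t2.2} with the exact sequence of Proposition~\ref{h-seq}, noting that $H^1(F/K, B(F)_{\fP^\infty}) \simeq B_{\fP^2}/B_\fP$. You supply a bit more detail than the paper does (the identification $B(F)_{\fP^\infty} = B_{\fP^2}$ and the cyclic-cohomology computation via $\delta$ acting as $-1$), but the argument is the same bookkeeping from the same two inputs.
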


\section{Infinite descent over the field $F_\infty=K(B_{\fP^\infty})$ and a main conjecture for~$F_\infty/K$}\label{S5}
In this section, we will use an infinite descent method due to Coates \cite{Coa} and an Iwasawa main conjecture to show that
$$L(\ov{\phi},1)\neq0,$$
and to give the precise $\fP$-adic valuation for the algebraic part of the $L$-value $L(\ov{\phi},1)$. This precise $\fP$-adic valuation will play a key role as the base case of an induction argument in Section~\ref{S6}.

We recall that $F_\infty = K(B_{\fP^\infty})$.  A large part of this section, namely the construction of the $\fP$-adic $L$-function for~$B$ over~$F_\infty/K$ and the proof of the main conjecture for~$B$ for the tower $F_\infty/F$, has been established in \cite{CL}. We will just briefly recall these results and use the results from Sections \ref{S3} and \ref{S4} in order to obtain the desired $\fP$-adic valuation for the algebraic $L$-value of~$\phi$.

We define $M(F_\infty)$ to be the maximal abelian $2$-extension of~$F_\infty $ which is unramified outside the primes lying above~$\fp$, and we put
$$
X(F_\infty)  = \Gal(M(F_\infty)/F_\infty).
$$
By maximality,  $M(F_\infty)$ is Galois over~$K$, and $\msg = \Gal(F_\infty/K)$ acts on $X(F_\infty)$ in the usual manner via lifting of inner automorphisms. We list some results relating $X(F_\infty)$ to the Selmer groups in the following theorem, whose proof can be found in
\cite[Theorem 3.9, Proposition 3.10 and Proposition 3.11]{CL}.

\begin{thm}\label{sel-rel-x}

\begin{enumerate}
  \item We have
  $$\Sel_{\fP^\infty}(B/F_\infty)= \Sel'_{\fP^\infty}(B/F_\infty) = \Hom(X(F_\infty), B_{\fP^\infty}),$$
  where all these modules are Galois modules over~$\msg$.
  \item The Selmer group $\Sel'_{\fP^\infty}(B/F_\infty)$ satisfies
  $$\Sel'_{\fP^\infty}(B/F_\infty) = \Sel'_{\fP^\infty}(B/F_\infty)^\Delta,$$
  where $\Delta=\Gal(F_\infty/K_\infty)$.
  \item The restriction map yields the isomorphism
\[\Sel'_{\fP^\infty}(B/F)\simeq \Sel'_{\fP^\infty}(B/F_\infty)^{\Gamma},\]
where $\Gamma=\Gal(F_\infty/F)$.
\end{enumerate}
\end{thm}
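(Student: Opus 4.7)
For part (1), my plan is to first show that $H^1(F_{\infty,w}, B)(\fP) = 0$ for every prime $w$ of $F_\infty$ above $\fp$, making the local Selmer condition at such $w$ automatic and forcing $\Sel_{\fP^\infty}(B/F_\infty) = \Sel'_{\fP^\infty}(B/F_\infty)$. Since $B$ acquires good reduction over $F$ by Theorem \ref{good-red}, and since $F_\infty/F$ is deeply ramified at every prime above $\fp$ (being generated by the full $\fP^\infty$-torsion of $B$, which sits inside the formal group at $\fp$), the Coates--Greenberg theorem yields $H^1(F_{\infty,w}, \hat B)(\fP) = 0$ for the formal group $\hat B$. A devissage through the short exact sequence $0 \to \hat B \to B \to \tilde B \to 0$, combined with divisibility of $\tilde B(\bar k_w)(\fP)$, then gives the required vanishing. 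For the second equality, the inclusion $B_{\fP^\infty} \subset F_\infty$ trivializes the Galois action on the coefficient module, so $H^1(F_\infty, B_{\fP^\infty})$ identifies with continuous homomorphisms from the maximal pro-$2$ abelian quotient of $G_{F_\infty}$ to $B_{\fP^\infty}$; imposing the Selmer condition of being unramified outside $\fp$ cuts this down to $\Hom(X(F_\infty), B_{\fP^\infty})$, by the defining maximality of $M(F_\infty)$. The identification is $\msg$-equivariant by pure Galois functoriality.

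For part (2), transporting the $\Delta$-action through the identification from part (1), the group $\Hom(X(F_\infty), B_{\fP^\infty})$ carries the diagonal $\msg$-action; since $\delta$ acts on $B_{\fP^\infty}$ by $-1$, a homomorphism $f$ is $\Delta$-fixed if and only if $f(\delta x) = -f(x)$ for all $x \in X(F_\infty)$. Thus the claimed equality is equivalent to showing that $\delta$ acts on $X(F_\infty)$ by multiplication by $-1$. My plan is to prove this via Kummer theory at the primes above $\fp$: using the good reduction of $B$ over $F$, the $\fP$-adic Kummer map identifies a relevant quotient of $X(F_\infty)$ with an inverse limit of formal group units, on which the $\msg$-action is controlled by the Galois character $\chi \colon \msg \to \CO_{T_\fP}^\times$ describing the action on $B_{\fP^\infty}$; since $\chi(\delta) = -1$, the claim follows. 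This reciprocity identification, following the pattern laid out in \cite{CL}, is the technical crux of the theorem and the main obstacle.

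For part (3), the plan is to apply the inflation-restriction sequence for $\Gal(F_\infty/F) = \Gamma \simeq \BZ_2$ acting on $B_{\fP^\infty}(F_\infty) = B_{\fP^\infty}$. Since $\Gamma$ is pro-$2$ of cohomological dimension one, $H^2(\Gamma, B_{\fP^\infty}) = 0$; and since $\Gamma$ acts on the divisible module $B_{\fP^\infty}$ via $\chi|_\Gamma$, with $\chi(\gamma) \in 1 + 2\BZ_2$ different from $1$ for a topological generator $\gamma$ of $\Gamma$, the operator $\gamma - 1$ is surjective on $B_{\fP^\infty}$, so $H^1(\Gamma, B_{\fP^\infty}) = 0$ as well. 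Hence restriction is an isomorphism $H^1(F, B_{\fP^\infty}) \xto{\sim} H^1(F_\infty, B_{\fP^\infty})^\Gamma$. A routine local diagram chase at each prime $v \nmid \fp$ of $F$, which is unramified in $F_\infty/F$, confirms that the Selmer conditions are preserved under this isomorphism, yielding the required identification $\Sel'_{\fP^\infty}(B/F) \simeq \Sel'_{\fP^\infty}(B/F_\infty)^\Gamma$.
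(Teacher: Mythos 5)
The paper itself does not give a proof of this theorem; it simply cites Theorem~3.9 and Propositions~3.10--3.11 of \cite{CL}. So the comparison below is against what a correct proof would have to accomplish, not against a specific in-text argument.

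Your plans for parts (1) and (3) are reasonable and follow the standard pattern. For (1), showing that $H^1(F_{\infty,w},B)(\fP)$ vanishes at every $w\mid\fp$ via deep ramification (Coates--Greenberg for the formal group, then d\'evissage to the abelian variety using divisibility of $\tilde B(\bar k_w)(\fP)$), and identifying $\Sel'_{\fP^\infty}(B/F_\infty)$ with homomorphisms out of $X(F_\infty)$ because $G_{F_\infty}$ acts trivially on $B_{\fP^\infty}$ and the conditions away from $\fp$ (including at $\fp^*$, where $B_{\fP^\infty}$ is \'etale) are unramified, is the right approach. For (3), vanishing of $H^1(\Gamma,B_{\fP^\infty})$ follows from surjectivity of $\gamma-1$ on a divisible module with $\rho_{\fP,\Gamma}(\gamma)\neq 1$, vanishing of $H^2$ is the standard fact that $\BZ_2$ has cohomological dimension one for discrete torsion modules, and the comparison of local conditions at primes of $F$ not above $\fp$ (all unramified in $F_\infty/F$ since $B$ has good reduction over $F$) is routine. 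These match what one expects.

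Part (2) is where the gap lies. Your reduction of the statement to ``$\delta$ acts on $X(F_\infty)$ by $-1$'' is correct, and you are right that this is the crux. But the proposed justification does not establish it. You assert that the $\fP$-adic Kummer map ``identifies a relevant quotient of $X(F_\infty)$ with an inverse limit of formal group units, on which the $\msg$-action is controlled by the Galois character $\chi$.'' This is not correct: by class field theory the inverse limit of (semi-local) principal units maps \emph{into} $X(F_\infty)$ with cokernel the inverse limit of ideal class groups, and the $\msg$-action on the local units is the genuine Galois action on the units of the deeply ramified local field, described by Coleman theory, \emph{not} multiplication by $\chi(\sigma)$. (The action on the torsion points $B_{\fP^\infty}$ is via $\chi$, but the action on the ambient group of formal-group points is not a character twist.) Moreover, even if $\delta$ were shown to act by $-1$ on the image of local units, you would still have to handle the complementary piece of $X(F_\infty)$ coming from the ideal class group of $F_\infty$ and the closure of global units, neither of which is touched by this argument. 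Since $p=2=|\Delta|$, there is no eigenspace decomposition of $X(F_\infty)$ under $\Delta$ to fall back on, so the assertion that the whole module has $\delta=-1$ genuinely requires a global argument (for instance, a genus-theory type argument comparing $X(F_\infty)$ to the corresponding Iwasawa module over $K_\infty$, or a careful analysis of the class-field-theoretic exact sequence), and the sketch as written does not supply one.
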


Let~$\Lambda(\Gamma)$ be the Iwasawa algebra of~$\Gamma$
with coefficients in~$\BZ_p$. As usual, we identify~$\Lambda$ with the power series ring $\BZ_p[[T]]$ by fixing a topological generator $\gamma$ of~$\Gamma$.
For any finitely generated torsion $\Lambda(\Gamma)$-module, we denote by~$c_\fM(T)$ the characteristic power series of~$\fM$ (defined up to a unit in~$\BZ_p[[T]]$). For a $\Lambda$-module~$\fM$, we Let~$\fM_\Gamma$ be the maximal quotient $\Lambda$-module of~$\fM$ such that $\Gamma$ acts trivially. Denote by~$\fM^\Gamma$ the $\Lambda$-submodule of~$\fM$ consisting of elements fixed by~$\Gamma$.
We recall the following elementary Lemma~\cite[Appendix A.2]{CS}.

\begin{lem}\label{t3.5} Let~$\fM$ be a finitely generated torsion $\Lambda(\Gamma)$-module, and let~$c_\fM(T)$ be any characteristic power series of~$\fM$. Then the following assertions are equivalent: (i) $c_\fM(0) \neq 0$, (ii) $\fM_\Gamma$ is finite, and (iii) $\fM^\Gamma$ is finite. Moreover, when these equivalent assertions hold, we have
\[
|c_\fM(0)|_p^{-1} = \#(\fM_\Gamma)/\#(\fM^\Gamma).
\]
Here, $|\cdot|_p$ is a fixed multiplicative valuation on $\BC_p$ extending $|\cdot|_\fP$. We note here that we always fix an embedding of~$T$ into $\BC_p$ via the prime~$\fP$.
\end{lem}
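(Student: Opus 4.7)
The plan is to reduce the entire statement to the case of elementary modules via the structure exact sequence given above. Set $N = \bigoplus_{k=1}^{m} \Lambda(\Gamma)/f_k\Lambda(\Gamma)$, so that we have
\[
0 \to N \to \fM \to D \to 0
\]
with $D$ finite and $c_\fM(T) = u(T) \prod_k f_k(T)$ for some unit $u \in \Lambda(\Gamma)^\times$. Taking the long exact sequence of $\Gamma$-cohomology yields
\[
0 \to N^\Gamma \to \fM^\Gamma \to D^\Gamma \to N_\Gamma \to \fM_\Gamma \to D_\Gamma \to 0.
\]
Since $D$ is finite and $\gamma - 1$ acts on it as an endomorphism of a finite set, its kernel $D^\Gamma$ and cokernel $D_\Gamma$ have the same cardinality. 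Hence the finiteness and order questions for $\fM^\Gamma$ and $\fM_\Gamma$ reduce to the corresponding questions for $N^\Gamma$ and $N_\Gamma$, and ultimately to the elementary pieces $\Lambda(\Gamma)/f\Lambda(\Gamma)$.

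For a single elementary module $E_f := \Lambda(\Gamma)/f\Lambda(\Gamma)$, I would use the isomorphism $i$ that sends $\gamma - 1$ to $T$ and compute both $\Gamma$-cohomology groups directly. The coinvariants are
\[
(E_f)_\Gamma \cong \BZ_p[[T]]/(f(T), T) \cong \BZ_p / f(0)\BZ_p,
\]
which is finite of order $|f(0)|_p^{-1}$ precisely when $f(0) \neq 0$. For the invariants, Weierstrass preparation lets me write $f = u_0 \cdot T^m g$ with $u_0$ a unit in $\Lambda(\Gamma)$ and $g(0) \neq 0$; a direct identification of the $T$-torsion of $\BZ_p[[T]]/(f)$ then shows $(E_f)^\Gamma = 0$ when $m = 0$ and $(E_f)^\Gamma \cong \BZ_p$ when $m \geq 1$. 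Thus both the invariants and coinvariants of $E_f$ are finite if and only if $f(0) \neq 0$, and in that case $|(E_f)^\Gamma| = 1$ while $|(E_f)_\Gamma| = |f(0)|_p^{-1}$.

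Putting the two steps together yields the equivalence of (i), (ii), (iii). Indeed, if $c_\fM(0) \neq 0$ then every $f_k(0) \neq 0$, so $N^\Gamma = 0$ and $N_\Gamma$ is finite of order $\prod_k |f_k(0)|_p^{-1} = |c_\fM(0)|_p^{-1}$, and the long exact sequence then forces $\fM^\Gamma$ and $\fM_\Gamma$ to be finite. Conversely, if some $f_k(0) = 0$ then $N^\Gamma$ contains an infinite $\BZ_p$ which injects into $\fM^\Gamma$, while the image of $N_\Gamma \to \fM_\Gamma$ is $N_\Gamma$ modulo the finite group $D^\Gamma$, hence still infinite. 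Finally, when (i) holds, the alternating product of orders in the long exact sequence equals $1$; combined with $\#(D^\Gamma) = \#(D_\Gamma)$ this gives
\[
\frac{\#(\fM_\Gamma)}{\#(\fM^\Gamma)} = \frac{\#(N_\Gamma)}{\#(N^\Gamma)} = |c_\fM(0)|_p^{-1},
\]
as required. The only genuinely non-formal step is the $T$-torsion computation for the elementary modules; all the rest is diagram chasing together with the standard equality of kernel and cokernel cardinalities for an endomorphism of a finite abelian group.
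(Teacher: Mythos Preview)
Your argument is correct and is the standard one. The paper itself does not supply a proof of this lemma: it simply records it as classical and refers to \cite[Appendix A.2]{CS}. Your reduction via the structure exact sequence to the elementary modules $\Lambda(\Gamma)/f\Lambda(\Gamma)$, together with the snake-lemma six-term sequence and the equality $\#(D^\Gamma)=\#(D_\Gamma)$, is exactly how this is usually established. One cosmetic remark: the factorisation $f=T^m g$ with $g(0)\neq 0$ that you use for the invariant computation does not require Weierstrass preparation at all---it follows immediately by extracting the largest power of $T$ dividing $f$ in $\BZ_p[[T]]$---so you can drop the appeal to Weierstrass and the spurious unit $u_0$.
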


\noindent In particular,  if $\fM$ has no non-zero finite $\Gamma$-submodules, then the lemma shows that $\fM^\Gamma = 0$ when $c_\fM(0) \neq 0$. Finally,  we write $\msi$ for the ring of integers of the completion of the maximal unramified extension of~$T_\fP$, and we define \mbox{$\Lambda_\msi(\Gamma) = \msi[[\Gamma]]$}
to be the Iwasawa algebra of~$\Gamma$ with coefficients in~$\msi$.

\medskip

Another key input we need for treating the case $p=2$ is that we need to show $X(F_\infty)$ is a finitely generated $\BZ_2$-module. It follows easily from our earlier work~\cite{CKL}.

\begin{prop} The Iwasawa module $X(F_\infty)$ is a finitely generated $\BZ_2$-module. In particular, $X(F_\infty)$ is a finitely generated torsion $\Lambda$-module.
\end{prop}

For simplicity, we write $c_X(T)$ for a characteristic power series of~$X(F_\infty)$. Let
\[
\rho_{\fP}: \msg \to \BZ_2^\times
\]
be the character giving the action of the Galois group $\msg$ on $B_{\fP^\infty}$. Let~$\mathrm{T}_\fP B$ be the $\fP$-adic Tate module of~$B$.
Put
$$\mathrm{T}_\fP B(-1) = \Hom(B_{\fP^\infty}, T_\fP/\CO_{T_\fP}).$$
If $\fM$ is any finitely generated torsion $\Lambda(\Gamma)$-module, we define
$$\fM(-1) = \fM\otimes_{\CO_\fP}(\mathrm{T}_\fP B(-1)).$$
Then $\Hom(X(F_\infty), B_{\fP^\infty})$ is isomorphic to
$$\Hom(X(F_\infty)(-1), T_\fP/\CO_{T_\fP})$$
as $\Gamma$-modules. Since $T_\fP/\CO_{T_\fP}=\BQ_2/\BZ_2$ and Theorem~\ref{sel-rel-x}, the Pontryagin dual of~$\Sel_{\fP^\infty}(B/F_\infty)$ is equal to $X(F_\infty)(-1)$. Denote by~$\rho_{\fP,\Gamma}$ the restriction of~$\rho_\fP$ to~$\Gamma$.

\begin{prop}\label{t3.6} Let~$c_X(T)$ be a characteristic power series of~$X(F_\infty)$, and let~$u = \rho_{\fP, \Gamma}(\gamma)$. Then $c_X(u-1) \neq 0$. Moreover, we have
\begin{equation}\label{t3.7}
|c_X(u - 1)|_p^{-1} = \frac{1}{2}  \left|\left(1-\frac{\phi(\fp)}{2}\right)\right|_\fP^{-1}.
\end{equation}
\end{prop}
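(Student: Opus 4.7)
The strategy is to translate the finiteness and exact order given by Corollary \ref{sel'-f} into statements about the characteristic power series $c_X(T)$ via the three identifications in Theorem \ref{sel-rel-x}. First, I would combine parts (1), (2) and (3) of Theorem \ref{sel-rel-x} to write
\[
\Sel'_{\fP^\infty}(B/F)^\Delta \;=\; \Sel'_{\fP^\infty}(B/F) \;\simeq\; \Sel'_{\fP^\infty}(B/F_\infty)^\Gamma \;=\; \Hom(X(F_\infty),B_{\fP^\infty})^\Gamma,
\]
where the first equality uses that the Selmer group is already $\Delta$-fixed at the $F_\infty$-level (hence also after taking $\Gamma$-invariants).

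Second, I would absorb the twist: writing $Y = X(F_\infty)(-1)$, the canonical identification $\Hom(X(F_\infty), B_{\fP^\infty}) \simeq \Hom(Y, T_\fP/\CO_{T_\fP})$ transfers the $\Gamma$-action entirely to the source, so that
\[
\Hom(X(F_\infty), B_{\fP^\infty})^\Gamma \;=\; \Hom_\Gamma(Y, T_\fP/\CO_{T_\fP}) \;=\; \Hom(Y_\Gamma, T_\fP/\CO_{T_\fP}).
\]
Corollary \ref{sel'-f} therefore tells us that $Y_\Gamma$ is finite, and Pontryagin duality (for finite cofinitely-generated $\CO_{T_\fP}$-modules) gives $\#(Y_\Gamma) = \#(\Sel'_{\fP^\infty}(B/F)^\Delta)$.

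Third, I would invoke Lemma \ref{t3.5} applied to the torsion $\Lambda(\Gamma)$-module $Y$. Since $Y_\Gamma$ is finite, any characteristic power series $c_Y(T)$ of $Y$ satisfies $c_Y(0)\neq 0$; but $c_Y(T) = c_X(u(1+T)-1)$, so $c_Y(0) = c_X(u-1) \neq 0$, which is the first assertion. For the exact value, I would use Greenberg's theorem, which implies $X(F_\infty)$ has no non-zero finite $\Gamma$-submodule and therefore (as noted in the excerpt) the same holds for its twist $Y$, so $Y^\Gamma = 0$. Lemma \ref{t3.5} then yields $|c_X(u-1)|_p^{-1} = |c_Y(0)|_p^{-1} = \#(Y_\Gamma)$, and substituting the value from Corollary \ref{sel'-f} gives exactly the right-hand side of \eqref{t3.7}.

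The main conceptual step is the compatibility of the three identifications in Theorem \ref{sel-rel-x} with the $\msg$-action, together with the correct bookkeeping of the Tate twist so that $c_Y(0)$ really does evaluate to $c_X(u-1)$; once this is set up, the conclusion is immediate from Corollary \ref{sel'-f} and Lemma \ref{t3.5}.
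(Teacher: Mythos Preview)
Your proposal is correct and follows essentially the same route as the paper: the paper's proof is the short paragraph immediately preceding the statement, which invokes Theorem \ref{sel-rel-x}, the twist identification $\Hom(X(F_\infty),B_{\fP^\infty})\simeq \Hom(X(F_\infty)(-1),T_\fP/\CO_{T_\fP})$, Greenberg's no-finite-submodule theorem, and Corollary \ref{sel'-f}, exactly as you do. Your write-up simply makes explicit the steps (passing to $Y=X(F_\infty)(-1)$, reading off $\#(Y_\Gamma)$ by Pontryagin duality, and applying Lemma \ref{t3.5} with $Y^\Gamma=0$) that the paper compresses into one sentence.
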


\begin{proof}
By Theorem~\ref{sel-rel-x}, $X(F_\infty)(-1)_\Gamma$ is the Pontryagin dual of~$\Sel'_{\fP^\infty}(B/F)^\Delta$. By Propositions \ref{t2.2} and \ref{h-seq}, we know that $X(F_\infty)(-1)_\Gamma$ is finite.
It is clear that the characteristic power series of the $\Lambda(\Gamma)$-module $X(F_\infty)(-1)$ is given by \mbox{$c_X(u(1+T) - 1)$}. Applying Lemma~\ref{t3.5}, the first assertion follows.
The remaining part of the proposition follows from a theorem of Greenberg, which assers that $X(F_\infty)(-1)$ has no non-zero finite $\Lambda$-submodules.
\end{proof}

Recall that we have chosen a global minimal generalised Weierstrass equation \eqref{1} for~$A/H$. Moreover, since $H = K(j(\CO_K))$ and we have fixed an embedding of~$K$ into $\BC$, we also have an embedding of~$H$ into $\BC$. The N\'{e}ron differential $\omega=dx/(2y+a_1x+a_3)$ on $A/H$ then has a complex period lattice of the form $\CL=\Omega_\infty \CO_K$, where $\Omega_\infty$ is a nonzero complex number which is uniquely determined up to sign.

We now fix an embedding of~$H$ into the fraction field of~$\msi$, which amounts to choosing a prime~$w$ of~$H$ lying above~$\fp$. We write $\wh{A}$ for the formal group of~$A$ under this embedding. Since $\wh{A}$ has height $1$ as a formal group, there exists an isomorphism
\begin{equation}\label{4.2}
  j_\fp: \wh{\BG}_m\xto{\sim} \wh{A}
\end{equation}
 of formal groups over~$\msi$, where $\wh{\BG}_m$ denotes the formal multiplicative group. As usual, we take as a parameter for~$\wh{A}$ the local parameter $t=-\frac{x}{y}$ at infinity of the Weierstrass equation \eqref{1}. The isomorphism $j_\fp$ is then given by a power series $t=j_\fp(w_0)$ with coefficients in~$\msi$, where $w_0$ is the parameter of~$\wh{\BG}_m$. We then define the $\fp$-adic period $\Omega_\fp$ to be the coefficient of~$w_0$ in the formal power series $t=j_\fp(w_0)$. Since $j_\fp$
 is an isomorphism, $\Omega_\fp$ is a unit in~$\msi$.

We fix an embedding of the field $T$ into $\BC$ which extends our fixed embedding of~$K$ into $\BC$. We define the Hecke $L$-series
\[
L(\ov{\phi}^k, s)=\sum_{ (\fb,\fq)=1}\frac{\ov{\phi}(\fb)}{N(\fb)^s},
\]
where the sum is taken over all integral ideals $\fb$ of~$K$ which are prime to $\fq$. It is classical
that the Dirichlet series on the right converges for~$\mathrm{Re}(s) > 3/2$, and it has analytic continuation to the whole complex plane.
Finally, we fix an embedding of the compositum $HT$ into the fraction field of~$\msi$ which induces the prime~$w$ of~$H$ and the prime~$\fP$ of~$T$.
This is possible because $H\cap T = K$.

The following proposition gives the $\fP$-adic $L$-functions and the Iwasawa main conjecture for the tower $F_\infty/F$. A detailed proof can be found in \cite[Theorems 5.4 and 7.13, Sections 4--7]{CL}.

\begin{prop}\label{t4.1}
For all odd integers $k\geq1$, we have
\[\frac{L(\ov{\phi}^k,k)}{\Omega^{k}_\infty}\in HT.\]
There exists a unique measure $\mu_A$ in~$\Lambda_\msi(\Gamma)$
such that, for all odd positive integers $k = 1, 3, 5, \ldots$, we have
\begin{equation}\label{4.4}
\Omega_\fp^{-k}\int_{\Gamma} (\rho_{\fP, \Gamma})^kd\mu_A = (k-1)!\Omega_\infty^{-k}L(\ov{\phi}^k, k)(1-\phi^k(\fp)/2).
\end{equation}
Furthermore, $\mu_A$ satisfies the main conjecture
\[
c_X\Lambda_\msi(\Gamma) = \mu_A \Lambda_\msi(\Gamma),
\]
where  $c_X$ is a characteristic element for~$X(F_\infty)$.
\end{prop}

Combined with Proposition~\ref{t3.6}, we thus obtain a new proof of the theorem of Rohrlich on the non-vanishing of~$L(A,1)$ on noting that $L(A,1)\neq0$ if and only if $L(\ov{\phi},1)\neq0$. Furthermore, we obtain the precise $2$-adic valuation of the algebraic part of the $L$-value $L(\ov{\phi},1)/\Omega_\infty$:
\bigskip

\begin{cor}\label{t4u.2} We have $L(\ov{\phi},1) \neq 0$. Moreover, we have
\begin{equation}\label{4.6}
\left|\frac{L(\ov{\phi},1)}{\Omega_\infty}\right|_2^{-1}=2^{-1}.
\end{equation}
\end{cor}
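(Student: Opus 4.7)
The plan is to combine the interpolation formula of Theorem \ref{t4.1} with the main conjecture identity $\mu_A \Lambda_\msi(\Gamma) = c_X \Lambda_\msi(\Gamma)$ (also in Theorem \ref{t4.1}) and the explicit evaluation of $|c_X(u-1)|_p$ provided by Proposition \ref{t3.6}. First I would specialise the interpolation formula \eqref{4.4} at $k=1$. Identifying $\Lambda_\msi(\Gamma) \simeq \msi[[T]]$ via $\gamma \mapsto 1+T$, the integral $\int_\Gamma \rho_{\fP,\Gamma}\, d\mu_A$ becomes $\mu_A(u-1)$, where $u = \rho_{\fP,\Gamma}(\gamma)$. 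The formula thus reads
\begin{equation*}
\Omega_\fp^{-1}\mu_A(u-1) = \Omega_\infty^{-1} L(\ov{\phi},1)\left(1 - \frac{\phi(\fp)}{2}\right).
\end{equation*}

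Next, I would invoke the main conjecture to write $\mu_A(T) = v(T)\, c_X(T)$ for some unit $v(T) \in \Lambda_\msi(\Gamma)^\times$, so that evaluation at $T = u-1$ gives $|\mu_A(u-1)|_p = |c_X(u-1)|_p$. By Proposition \ref{t3.6}, this quantity is non-zero and satisfies
\begin{equation*}
|c_X(u-1)|_p^{-1} = \tfrac{1}{2}\left|1 - \tfrac{\phi(\fp)}{2}\right|_\fP^{-1}.
\end{equation*}
In particular $\mu_A(u-1) \neq 0$, and combining with the displayed interpolation formula immediately yields $L(\ov{\phi},1) \neq 0$, thereby recovering Rohrlich's theorem.

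To extract the precise $\fP$-adic valuation \eqref{4.6}, I would rearrange the interpolation formula into
\begin{equation*}
\frac{L(\ov{\phi},1)}{\Omega_\infty} = \frac{\Omega_\fp^{-1}\mu_A(u-1)}{1 - \phi(\fp)/2}.
\end{equation*}
Taking $|\cdot|_p^{-1}$ on both sides, and using that $\Omega_\fp \in \msi^\times$ has absolute value $1$ and that $|\cdot|_p$ extends $|\cdot|_\fP$, the factor $|1 - \phi(\fp)/2|_\fP^{-1}$ appearing in the valuation of $c_X(u-1)$ cancels exactly against the denominator $|1 - \phi(\fp)/2|_p^{-1}$, leaving behind the factor of $\tfrac{1}{2}$ from Proposition \ref{t3.6}. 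This gives $|L(\ov{\phi},1)/\Omega_\infty|_p^{-1} = 2^{-1}$, as required.

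At this stage there is no serious conceptual obstacle, since both main inputs (the main conjecture of Theorem \ref{t4.1} and the descent-theoretic computation behind Proposition \ref{t3.6}) have been established. The only bookkeeping care needed is to ensure that the identification of the measure $\mu_A$ with a power series under the chosen topological generator $\gamma$ is consistent with the evaluation rule $\int_\Gamma \chi\, d\mu_A = \mu_A(\chi(\gamma) - 1)$, so that the archimedean interpolation property transports cleanly to an equality of $\fP$-adic absolute values.
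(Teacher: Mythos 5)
Your proposal is correct and follows essentially the same route as the paper: specialise the interpolation formula \eqref{4.4} at $k=1$, use the main conjecture to equate $|\mu_A(u-1)|_p$ with $|c_X(u-1)|_p$, invoke Proposition \ref{t3.6} for the explicit value $\tfrac{1}{2}\left|1-\phi(\fp)/2\right|_\fP^{-1}$, note that $\Omega_\fp$ is a unit, and observe that the Euler factor $\left(1-\phi(\fp)/2\right)$ cancels. The paper's own proof is just a terser version of exactly this argument.
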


\begin{proof}
By Theorem~\ref{t4.1}, we have
$$c_X = \mu_A\beta$$
for some unit $\beta$ in~$\Lambda_\msi(\Gamma)$.
Note that the character $\rho_{\fP,\Gamma}$ can be linearly extended to an algebra homomorphism from $\Lambda_\msi(\Gamma)$ to $\msi$.
Applying $\rho_{\fP,\Gamma}$ on both sides of the above equation and
recalling that $u = \rho_{\fP, \Gamma}(\gamma)$, we obtain
\[
|c_X(u-1)|_2^{-1} = |\mu_A(u-1)|_2^{-1}  = \left|\int_\Gamma\rho_{\fp, \Gamma} d\mu_A\right|_2^{-1}.
\]
In particular, we conclude from \eqref{4.4} for~$k=1$ that $c_X(u-1) \neq 0$ if and only if $L(\ov{\phi},1) \neq 0$. Thus, the first assertion of the corollary follows from the first assertion of Proposition~\ref{t3.6}. Finally, since $\Omega_\fp$ is a unit in
$\msi$, the formula \eqref{4.6} follows immediately from \eqref{t3.7} and \eqref{4.4} for~$k=1$.
\end{proof}

We end this Section~with some remarks on the algebraic $L$-value in Corollary~\ref{t4u.2}. It was shown by Buhler and Gross (or in \cite{CL}) that the fractional ideal $\frac{L(\ov{\phi},1)}{\Omega_\infty}\CO_{HT}$ descends to a fractional ideal $\fm$ in~$T$. Corollary~\ref{t4u.2} shows the $\fP$-adic valuation of~$\fm$ is equal to $-1$. Thus, we have proved:

\begin{thm}
The $\mathfrak{P}$-part of the refined Birch--Swinnerton-Dyer formula holds for~$B$.
\end{thm}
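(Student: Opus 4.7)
The plan is to compare the $\fP$-adic valuations of the two sides of the Buhler--Gross refinement of the Birch--Swinnerton-Dyer conjecture for $B/K$. Since $L(\ov{\phi},1) \neq 0$ by Corollary \ref{t4u.2} and $B(K)$ is finite by Remark \ref{finiteness}, the rank is zero and the regulator term is trivial, so the $\fP$-part of the conjecture takes the form
\[
v_\fP(\fm) = v_\fP\bigl(\#\Sha(B/K)\bigr) + \sum_v v_\fP\bigl(C_v(B)\bigr) - 2\, v_\fP\bigl(\#B(K)\bigr),
\]
where each arithmetic invariant is understood through its natural $\CO_T$-module structure.

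On the analytic side, the remark preceding the theorem records that $v_\fP(\fm) = -1$ by Corollary \ref{t4u.2}. On the arithmetic side, I would verify each contribution separately. First, Proposition \ref{2-descent-B} gives $\Sha(B/K)(\fP) = 0$, contributing $0$. Next, since $A$ has bad reduction only at primes above $q$, the abelian variety $B$ has good reduction at every finite prime of $K$ other than $\fq$, so only $C_\fq$ contributes to the Tamagawa sum; the proof of Lemma \ref{tam}(2) applies verbatim with $\fP$ in place of $\fP^*$, since $\fC_w \simeq \CO_K/2\CO_K$ is symmetric in $\fp$ and $\fp^*$, yielding $(C_\fq)(\fP) \simeq \CO_{T_\fP}/\fP$ and a contribution of $+1$. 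Finally, Proposition \ref{2-descent-B} gives $B_\fP(K) \simeq \BZ/2\BZ$; combining this with $[F\!:\!K]=2$ and $F = K(B_{\fP^2})$ yields $B_{\fP^2}(K) = B_\fP(K) \simeq \BZ/2\BZ$, so the cyclic $\CO_{T_\fP}$-module $B(K)(\fP)$ equals $\CO_{T_\fP}/\fP$, contributing $-2$. The totals agree: $-1 = 0 + 1 - 2$.

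The real content has already been delivered in Sections \ref{S3}--\ref{S5}, so the only steps at this stage that require care are (a) checking that the Tamagawa computation of Lemma \ref{tam}(2) is indeed symmetric under swapping $\fP$ and $\fP^*$, and (b) promoting the identity $B_\fP(K) \simeq \BZ/2\BZ$ to the determination of the full $\fP$-primary torsion $B(K)(\fP)$ rather than just its $\fP$-torsion subgroup. The latter is the main subtlety, but it follows cleanly from the fact that $F/K$ has degree exactly~$2$, which is Theorem \ref{good-red} combined with the structure of $B_{\fP^2}/B_\fP$ as a one-dimensional vector space over the residue field.
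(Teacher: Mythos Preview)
Your proposal is correct and follows the same line as the paper, which in fact gives essentially no proof beyond citing Corollary~\ref{t4u.2} for $v_\fP(\fm)=-1$ and declaring the result established. You make explicit what the paper leaves implicit: the verification that the arithmetic side of the Buhler--Gross formula also equals $-1$, by assembling the $\fP$-contributions of $\Sha$, the Tamagawa factor, and the torsion from Proposition~\ref{2-descent-B}, Lemma~\ref{tam}, and the structure of $B(K)(\fP)$. This is a genuine improvement in completeness.

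One small correction: the fact that $[F:K]=2$ is not Theorem~\ref{good-red} (which concerns good reduction of $B$ over $F$), but is stated in the sentence immediately preceding it. Your argument for $B(K)(\fP)=B_\fP$ is nonetheless sound: since $B_{\fP^\infty}\simeq T_\fP/\CO_{T_\fP}$ as an $\CO_{T_\fP}$-module, $B(K)(\fP)$ is cyclic, hence equals $B_{\fP^m}$ for some $m$; and $m\geq 2$ would force $F=K(B_{\fP^2})=K$, contradicting $[F:K]=2$. Also, your appeal to symmetry in Lemma~\ref{tam}(2) is legitimate, since the proof there only uses $\fC_w\simeq\CO_K/2\CO_K$ and the structure of $\CO_T\otimes_\BZ\BZ_2$, both of which are symmetric under swapping $\fP$ and $\fP^*$.
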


\section{Generalised Zhao's method and $2$-adic valuation of central $L$-values}\label{S6}

In this section, we will use an extension of Zhao's induction method \cite{Zhao1, Zhao2}, modified for the abelian varieties, to obtain the exact valuation of central $L$-values for quadratic twists, using Corollary~\ref{t4u.2} as the base case. The detailed proof goes in a similar way as in \cite[Section~\S9]{CL}, for the reader's convenience, we will sketch the proof in the following part of the section.

As defined in the introduction, Let~$R\in\fR$ be of the form $R = r_1\cdots r_k$, where $k\geq 0$ and $r_1, \dots, r_k$ are distinct primes such that (i) $r_i \equiv 1 \mod 4$, and  (ii) $r_i$ is inert in~$K$ for~$i=1,\ldots, k$. We recall that $\phi$ is the Hecke character of~$B/K$. Then, since $(R, q) = 1$, for each positive integer $d\mid R$, the Hecke character $\phi_d$ of~$B^{(d)}/K$ is given by~$\phi_d=\phi\chi_d$, where $\chi_d$ denotes the abelian character of~$K$ defining the quadratic extension $K(\sqrt{d})/K$. Here, as usual, we denote by~$B^{(d)}$ the twist of~$B$ by the extension $K(\sqrt{d})/K$. The remainder of this Section~will be dedicated to concluding the proof of the following main result stated in the introduction.

\begin{thm}\label{main2}
For any $R = r_1\cdots r_k \in \fR$. Then $\frac{L(\overline{\phi}_R,1)\sqrt{R}}{\Omega_\infty}\in TH$, and for any prime~$\mathcal{P}$ of~$TH$ lying above~$\fp$, we have
\begin{align*}\ord_{\mathcal{P}}\left(\frac{L(\overline{\phi}_R,1)\sqrt{R}}{\Omega_\infty}\right)=k-1.
\end{align*}
In particular,  $L(A^{(R)}/H, 1) \neq 0$.
\end{thm}

We define the imprimitive Hecke $L$-series
\[L_R(\ov{\phi}_d, s)=\sum_{ (\fb,R\fq)=1}\frac{\ov{\phi}_d(\fb)}{N(\fb)^s},\]
where the sum on the right is taken over all integral ideals $\fb$ of~$K$ which are prime to $R\fq$. 

We define the fields
\begin{equation}\label{6}
J_R = K(\sqrt{r_1}, \dots, \sqrt{r_k}), \, \, H_R = H(\sqrt{r_1}, \dots, \sqrt{r_k}),
\end{equation}
and we recall that $\Omega_\infty$ is the complex period defined in Section~\ref{S5}. The proof of the following proposition can be found in Proposition 9.8 and Theorem 9.9 of \cite{CL}.

\begin{prop}\label{key-identity}
 There exists an element $V_R\in H_R$ which is integral at all places of~$H_R$ above~$2$, and which satisfies
\[
  \sum_{d\mid R}L_R(\ov{\phi}_d,1)/\Omega_\infty=2^k  V_R,
\]
where the sum runs over all positive divisors $d$ of~$R$.
\end{prop}

We list the following two lemmas, whose proofs are identical to those of \cite[Lemma 9.13]{CL} and \cite[(4.6) in Theorem 4.2]{choi}, respectively.

\begin{lem}
For each $R \in \fR$, the extension $J_R/K$ defined by \eqref{6} is unramified at the primes of~$K$ lying above 2. \end{lem}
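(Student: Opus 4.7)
The plan is to reduce the claim to the familiar fact that, for a prime $r \equiv 1 \bmod 4$, the quadratic extension $\BQ(\sqrt{r})/\BQ$ is unramified at $2$ (its discriminant equals $r$, which is odd). The compositum of extensions of $K$ that are unramified at a prime of $K$ above $2$ is again unramified there, so it suffices to prove that each individual extension $K(\sqrt{r_i})/K$ is unramified at both $\fp$ and $\fp^{*}$.

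First I would work locally. Let $\fP \in \{\fp, \fp^{*}\}$. Since $q \equiv 7 \bmod 8$, the prime $2$ splits in $K$, so $\fP$ has residue degree $1$ over $\BQ$ and the completion satisfies $K_{\fP} = \BQ_{2}$. The ramification of $K(\sqrt{r_i})/K$ at $\fP$ is determined by the local quadratic extension
\[
K_{\fP}(\sqrt{r_i})/K_{\fP} \;=\; \BQ_{2}(\sqrt{r_i})/\BQ_{2},
\]
which is either trivial (if $r_i$ is a square in $\BQ_{2}$) or a quadratic unramified extension (since $r_i \equiv 1 \bmod 4$ implies that the global discriminant of $\BQ(\sqrt{r_i})/\BQ$ equals $r_i$, an odd integer, so $2$ does not divide the discriminant and hence does not ramify). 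In either case, $K_{\fP}(\sqrt{r_i})/K_{\fP}$ is unramified.

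Next I would globalise. Fix $\fP \in \{\fp,\fp^{*}\}$ and let $\mathfrak{Q}$ be any prime of $J_R$ above $\fP$. Since $J_R$ is the compositum of the fields $K(\sqrt{r_i})$ for $i=1,\dots,k$, and each of the local extensions $K_{\fP}(\sqrt{r_i})/K_{\fP}$ is unramified by the previous step, the local compositum $K_{\fP}(\sqrt{r_1},\dots,\sqrt{r_k})/K_{\fP}$ is an unramified extension of $K_{\fP}$. Since $(J_R)_{\mathfrak{Q}}$ sits inside this local compositum, the ramification index $e(\mathfrak{Q}/\fP)$ equals $1$. Applying this to both $\fp$ and $\fp^{*}$ gives the lemma.

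There is essentially no obstacle here: the argument is the standard reduction from global to local ramification combined with the elementary fact that $r_i \equiv 1 \bmod 4$ guarantees unramifiedness at $2$. The only point requiring a moment of care is recording that $2$ splits in $K$ so that the residue field of $K_{\fP}$ is $\BF_2$, which is what ensures $K_{\fP} = \BQ_{2}$ and allows us to invoke the rational fact $r_i \equiv 1 \bmod 4 \Rightarrow 2 \nmid \disc(\BQ(\sqrt{r_i})/\BQ)$ verbatim.
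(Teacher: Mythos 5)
Your proof is correct. The route is slightly different from the paper's: you localise at each prime $\fP\in\{\fp,\fp^*\}$, use that $2$ splits in $K$ to identify $K_\fP$ with $\BQ_2$, and then invoke the classical discriminant formula $\disc(\BQ(\sqrt{r})/\BQ)=r$ for $r\equiv 1\bmod 4$. The paper stays global: it notes that $m=(\sqrt{r}-1)/2$ is a root of $V(X)=X^2+X-(r-1)/4\in\CO_K[X]$ (here $r\equiv 1\bmod 4$ makes the constant term integral), computes $V'(m)=2m+1=\sqrt{r}$, and observes this is a unit at $\fp$ and $\fp^*$, so $K(m)=K(\sqrt{r})$ is \'etale over those primes. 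The mathematical content is the same — the only point used is $r\equiv 1\bmod 4$ — but the paper's version is self-contained and does not need the splitting of $2$ in $K$, whereas your reduction to $\BQ_2$ leans on it (harmlessly, since it holds here) and treats the quadratic-field discriminant as a known fact rather than re-deriving it. Either argument is perfectly adequate for the lemma.
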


\begin{lem}\label{4.3} Let~$d$ be any positive divisor of~$R$, and Let~$r$ be any prime dividing~$R$ with $(r, d) = 1$. Then $\phi_d(r\CO_K) = - r$. \end{lem}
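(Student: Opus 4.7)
My plan is to write $\phi_d = \phi\cdot\chi_d$ and compute $\phi_d(r\CO_K)$ as the product $\phi(r\CO_K)\cdot\chi_d(r\CO_K)$, handling the two factors separately.

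For the factor $\chi_d(r\CO_K)$: since $(r,2d)=1$, the extension $K(\sqrt{d})/K$ is unramified at $r\CO_K$, so $\chi_d(r\CO_K) = +1$ if and only if $d$ is a square in the residue field $\CO_K/r\CO_K \cong \BF_{r^2}$ (using that $r$ is inert in $K$). Since $r$ is odd, the cyclic subgroup $\BF_r^\times \subset \BF_{r^2}^\times$ is generated by $g^{r+1}$ for any generator $g$ of $\BF_{r^2}^\times$; because $r+1$ is even, $g^{r+1} = (g^{(r+1)/2})^2$ is itself a square, and hence so is every element of $\BF_r^\times$. In particular, the image of $d$ is a square in $\BF_{r^2}^\times$, giving $\chi_d(r\CO_K) = 1$.

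For the factor $\phi(r\CO_K)$: since $r\CO_K$ is generated by $r\in\BZ$, it is principal and hence trivial in $\Cl(K) \cong \Gal(H/K)$, so it splits completely in $H/K$. Pick any prime $w$ of $H$ above $r\CO_K$; then $N_{H/K}(w) = r\CO_K$, and the relation $\psi = \phi\circ N_{H/K}$ from Section \ref{S2} gives $\phi(r\CO_K) = \psi(w) \in \CO_K$. The value $\psi(w)$ has complex absolute value $r$ under every embedding and generates an ideal of norm $r^2$; since $r\CO_K$ is itself a prime of $\CO_K$ with $\CO_K^\times = \{\pm 1\}$ (as $q\ge 7$), I conclude $\psi(w) = \pm r$. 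To pin down the sign, I would invoke the explicit description of Gross's Hecke character from \cite{Gross78}: the mod-$\fq\CO_H$ finite-order character attached to $\psi$ restricts to the Legendre symbol on the diagonal image of $\BZ$ inside $\CO_H/\fq\CO_H$, so that $\psi(w) = r\cdot\sfrac{r}{q}$. It then remains to check $\sfrac{r}{q} = -1$: the inertness of $r$ in $K = \BQ(\sqrt{-q})$ gives $\sfrac{-q}{r} = -1$, and combined with $\sfrac{-1}{r} = 1$ (from $r\equiv 1 \bmod 4$) this yields $\sfrac{q}{r} = -1$; quadratic reciprocity, whose sign factor $(-1)^{(r-1)(q-1)/4}$ equals $+1$ since $(r-1)/2$ is even, then gives $\sfrac{r}{q} = \sfrac{q}{r} = -1$. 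Putting everything together, $\phi_d(r\CO_K) = (-r)\cdot 1 = -r$.

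The main obstacle is justifying that the finite-order character attached to $\psi$ restricts to the Legendre symbol on $\BZ$. Since the only roots of unity in $K$ (and therefore in the image of $\psi$ restricted to ideals fixed by complex conjugation) are $\pm 1$, the restriction must be either trivial or the unique quadratic character $\sfrac{\cdot}{q}$; ruling out triviality requires unwinding Gross's construction of $A/H$ in \cite{Gross78}. Once this compatibility is established, the rest of the argument reduces to the elementary residue-field and reciprocity computations above.
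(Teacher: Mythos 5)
Your proof is essentially the same as the paper's: both decompose $\phi_d = \phi\chi_d$ and evaluate the two factors on $\fr = r\CO_K$ separately, with the heart of the matter being the quadratic-residue computation showing $-r$ is a square modulo $\fq$ (equivalently $\sfrac{r}{q} = -1$), which you derive correctly from the inertness of $r$ and $r\equiv 1 \bmod 4$ via quadratic reciprocity.

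Two small remarks on the details. For $\chi_d(\fr)=1$, your residue-field argument (every element of $\BF_r^\times$ is a square in $\BF_{r^2}^\times$ because the index $r+1$ is even) is correct, but the paper's argument is slicker: since $r$ is inert in $K$ and $\Gal(K(\sqrt{d})/\BQ) \simeq (\BZ/2\BZ)^2$ is not cyclic, the Frobenius at $r$ (already nontrivial on $K$) cannot also be nontrivial on $K(\sqrt{d})/K$, as that would force a cyclic decomposition group of order $4$; hence $\fr$ splits. For $\phi(\fr) = -r$, your route via $\psi = \phi\circ N_{H/K}$ and a prime $w$ of $H$ above $\fr$ is an unnecessary detour: the paper evaluates $\phi$ directly using the explicit formula stated at the beginning of \S 2 of Buhler--Gross \cite{BG}, which pins down $\phi$ on principal ideals prime to $\fq$ in terms of the unique generator that is a quadratic residue mod $\fq$. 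That reference is exactly what closes the ``obstacle'' you flag at the end (identifying the finite-order character with the Legendre symbol rather than unwinding Gross's construction yourself); once you invoke it for $\phi$ directly, the passage through $\psi$ and $N_{H/K}$ becomes superfluous.
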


For each positive divisor $d$ of~$R$, we define
$$
\msl(d) = \sqrt{d}L(\ov{\phi}_d, 1)/\Omega_\infty, \, \, \msl = \msl(1).
$$
Recall from Corollary~\ref{t4u.2}, we always have $\msl \neq 0$ for all primes $q \equiv 7 \mod 8$. From \cite[Proposition 9.5]{CL},
we have

\begin{prop}\label{4.7} Assume $R \in \fR$, and Let~$d$ be any positive divisor of~$R$. Then $\msl(d)/\msl$ belongs to the field $T$.
\end{prop}

We can now conclude the proof of Theorem~\ref{main2} using induction on the number~$k$ of prime factors of~$R$. Assume first that $k=1$, so that $R = r$, a prime number. Then Proposition~\ref{key-identity} gives
\begin{equation}\label{4.8}
\msl(r)/\sqrt{r}+ (1-\ov{\phi}((r))/r^2)\msl = 2V_r.
\end{equation}
Let~$\mathfrak{Q}$ be any prime of~$HT(\sqrt{r})$ lying above the prime~$\fP$ of~$T$. We then have \mbox{$\ord_\mathfrak{Q}(V_r) \geq 0$}. Furthermore, by Lemma~\ref{4.3}, we have \mbox{$(1-\ov{\phi}((r))/r^2) = (1 + 1/r)$}. Since $r+1 \equiv 2 \mod 4$, it follows from Corollary~\ref{t4u.2} that \mbox{$\ord_\mathfrak{Q}((1-\ov{\phi}((r))/r^2)\msl) = 0$}. As $\ord_\mathfrak{Q}(V_r) \geq 0$, we conclude from \eqref{4.8} that $\ord_\mathfrak{Q}(\msl(r)/\sqrt{r}) = 0$, and so Theorem~\ref{main2} holds when   $k=1$.

Suppose now that $R = r_1\cdots r_k$ with $k \geq 2$. By Proposition~\ref{key-identity}, we have
\begin{equation}\label{4.9}
\msl(R)/\sqrt{R} + \sum_{d|R, d \neq 1, R}\Lambda(d, R)/\sqrt{d} + \msl \prod_{i=1}^{k}(1- \ov{\phi}((r_i))/r_i^2) = 2^kV_R,
\end{equation}
where
$$
\Lambda(d, R) = \msl(d) \prod_{r|R/d}(1-\ov{\phi}_d((r))/r^2).
$$
The problem here is that the terms $\Lambda(d, R)$ lie in an extension of~$HT$, where the prime~$\fP$ of~$T$ is unramified but will usually have a large residue class field extension. This means that one cannot carry through the inductive argument in its naive form, and we must appeal to Proposition~\ref{4.7} to get around it. The key to overcoming this difficulty is to divide both side of \eqref{4.9} by the non-zero number $\msl$. Then defining $\Phi(d, R) = \Lambda(d, R)/\msl$ for each positive integer divisor $d$ of~$R$, we obtain the equation
\begin{equation}\label{4.10}
\Phi(R)/\sqrt{R} + \sum_{d|R, d \neq 1, R}\Phi(d, R)/\sqrt{d}  + \prod_{i=1}^{k}(1- \ov{\phi}((r_i))/r_i^2) = 2^kV_R/\msl,
\end{equation}
where $\Phi(R) = \msl(R)/\msl$. Let~$H_R$ be the field defined in \eqref{6}, and we now take $\mathfrak{Q}$ to be any prime of the compositum $H_RT$ lying above~$\fp$ so that $\mathfrak{Q}/\fP$ is unramified. By Proposition~\ref{key-identity}, we have  $\ord_\mathfrak{Q}(V_R) \geq 0.$ Thus we conclude from Corollary~\ref{t4u.2} that $\ord_\mathfrak{Q}(2^kV_R/\msl) \geq k+1$. Thanks to  Lemma~\ref{4.3}, we have
\begin{equation} \label{4.11}
\ord_\mathfrak{Q}(\prod_{i=1}^{k}(1- \ov{\phi}((r_i))/r_i^2)) = k.
\end{equation}
On the other hand, our inductive hypothesis together with Corollary~\ref{t4u.2} and Lemma~\ref{4.3} shows that, for each positive divisor $d$ of~$R$ with $d \neq 1, R$, we have
\[
\ord_\mathfrak{Q}(\Phi(d, R)/\sqrt{d}) = k.
\]
Of course, these estimates alone do not allow us to conclude that $\ord_\mathfrak{Q}(\Phi(R)/\sqrt{R}) = k$ when applied to \eqref{4.10}. However, the argument is saved by Proposition~\ref{4.7},
which tells us that $\Phi(d, R)$ belongs to the field $T$ for every positive divisor $d$ of~$R$, and so it lies in the completion $T_\fP$ at $\fP$. Since $\fP$ has its residue degree of order 2, we can write for each divisor $d$ of~$R$ with $1<d<R$,
\[
\Phi(d, R)/\sqrt{d} = \sqrt{d}\pi_\fP^k(1 + \pi_\fP b_d),
\]
where $\pi_\fP$ is a local parameter at $\fP$ and $\ord_\fP(b_d) \geq 0$. Thus,
\[
\sum_{d|R, d \neq 1, R}\Phi(d, R)/\sqrt{d}  \equiv \pi_\fP^kD_R \mod \mathfrak{Q}^{k+1}
\]
with  $D_R = \sum_{d|R. d \neq 1, R}\sqrt{d}$. But
$$
D_R^2 \equiv  \sum_{d|R. d \neq 1, R} d \mod \mathfrak{Q}
$$
and $\sum_{d|R. d \neq 1, R} d \equiv 2^k \mod 2$, whence $\ord_\mathfrak{Q}(D_R) \geq 1$. Thus, we have shown that
$$
\ord_\mathfrak{Q}\left(\sum_{d|R, d \neq 1, R}\Phi(d, R)/\sqrt{d} \right) \geq k+1.
$$
It now follows from \eqref{4.10} and \eqref{4.11} that $\ord_\mathfrak{Q}(\Phi(R)) = k$.  Thus, again applying Corollary~\ref{t4u.2}, we
have finally completed the proof of Theorem~\ref{main2} by induction on the number of prime factors of~$R \in \fR$.

We will conclude this paper the proof of the density result in Corollary~\ref{cor1.2}.
The proof follows closely the ideas of Serre \cite{Serre}, which is based on generalisations of the Tauberian theorem of Ikehara due to Delange\cite{Delange}, Wintner\cite{Wintner} et al.\,and a method of Landau \cite{Landau}. Such an argument has already appeared in, for example, the works of Ono \cite{Ono} or Kriz--Li \cite{Kriz-Li}, but we write out the details for our particular case.

\begin{cor}[Corollary~\ref{cor1.2}] Let $D\geq 1$ denote a fundamental discriminant, and Let~$N(X) = \#\{\text{$D < X$ : $L(A^{(D)}/H,1)\neq 0\}$}$. Then we have
$$N(X)>\!\!>\frac{X}{\log^{\frac{3}{4}} X}.$$
\end{cor}

\begin{proof}

We define the set of primes
$\mathcal{P}$ denote the set of primes which is the \textit{complement} of the set of primes which are congruent to $1$ modulo $4$ and inert in~$K$. Then~$\mathcal{P}$ is \textit{regular} of density $1-\frac{1}{2}\cdot\frac{1}{2}=\frac{3}{4}$ in the sense of Delange \cite{Delange}, by the Chebotarev density theorem. Furthermore, $\mathcal{P}$ is \textit{associated} to the set
$E=\mathbb{N}_{>0}\backslash \mathfrak{R}$, in the sense that for any $p\in \mathcal{P}$ and any integer $m\geq 1$ not divisible by~$p$, we have $pm\in E$. Given $X$, Let~$E(X)=\{D\leq X: D\in E\}$ and $\mathfrak{R}(X)=\{D\leq X: D\in \mathfrak{R}\}$.

Noting that now $\mathfrak{R}(X)=E'(X)$ in the notation of \cite{Serre}, we have from \cite[Th\'{e}or\`{e}me 2.8]{Serre}, for any integer $k\geq 0$, real numbers $c_0, c_1,\ldots  , c_k$ with $c_0>0$  such that
\begin{align*}\#(\mathfrak{R}(X))&=\frac{X}{\log^{\frac{3}{4}} X} (c_0+c_1/\log X +\cdots + c_k \log^k X+O(1/\log^{k+1}X)).
\end{align*}
In particular,
\begin{align*}\#(\mathfrak{R}(X))&=c_0\frac{X}{\log^{\frac{3}{4}} X} +O(X/\log^{\frac{7}{4}}X).
\end{align*}
The result now follows, on noting that by Theorem~\ref{main} we have $L(A^{(D)}/H,1)\neq 0$ for all $D\in \mathfrak{R}$.
\end{proof}

\bigskip

\address{\textit{Yukako Kezuka}, Kanazawa University, Natural Science and Technology, Kakumamachi, Kanazawa, 920-1164, Ishikawa, Japan.}
\email{\it kezuka@se.kanazawa-u.ac.jp}

\medskip

\address{\textit{Yong-Xiong Li}, Beijing Institute of Mathematical Sciences and Applications, No.\,544, Hefangkou Village Huaibei Town, Huairou District Beijing 101408, Beijing, China.}
\email{\it yongxiongli@gmail.com}

\end{document}